\title{About the projective Finsler metrizability:
  First steps in the non-isotropic case}
\date{}
\author{T.~Milkovszki and Z.~Muzsnay} 
\theoremstyle{plain} 
\newtheorem{theorem}{Theorem}[section]
\newtheorem{proposition}[theorem]{Proposition}
\newtheorem{corollary}[theorem]{Corollary}
\newtheorem{lemma}[theorem]{Lemma}
\theoremstyle{definition}
\newtheorem{definition}[theorem]{Definition}
\newtheorem{remark}[theorem]{Remark}
\theoremstyle{remark}
\newcommand{\R}{\mbox{$\mathbb R$}} 
\newcommand{\ts}{\textsuperscript}
\def\TM{\mathcal{T}M}
\def\X#1{\mathfrak{X}(#1)}
\def\u#1{\underline{#1}}
\def\Ker{\mathrm{Ker \,}}
\def\Im{\mathrm{Im \,}}
\def\c{{^{\scriptsize c}}}
\def\E{\mathcal{E}}
\def\rank{\mathrm{rank \,}}
\def\nul{\mathrm{nul \,}}
\def\RTM{\mathbb R_{TM}}
\def\L{\mathcal L}
\def\o{\!\otimes \!}
\def\P{\mathcal P}
\def\PP{\mathcal{\widetilde{P}}}
\def\csum{\mathrm{\scriptstyle{cyc}}\!\!\sum}
\def\t{\!\times\!}
\def\+{\!+\!}
\def\={\!=\!}
\def\<{\!<\!}
\def\>{\!>\!}
\def\L{\mathcal L}
\def\C#1{\mathcal C_{#1}}
\def\Cr#1{\mathcal C^r_{#1}}
\def\P{\mathcal P}
\newenvironment{packed_enumerate}{
  \begin{enumerate}[topsep=3pt, partopsep=3pt,leftmargin=17pt]
    \setlength{\itemsep}{3pt}
    \setlength{\parskip}{3pt}
    \setlength{\parsep}{0pt}
  }{\end{enumerate}}
\begin{document}

\maketitle

\begin{abstract}
  We consider the projective Finsler metrizability problem: under what
  conditions the solutions of a given system of second-order ordinary
  differential equations (SODE) coincide with the geodesics of a Finsler
  metric, as oriented curves.  SODEs with isotropic curvature have already
  been thoroughly studied in the literature and have proved to be projective
  Finsler metrizable.  In this paper, we investigate the non-isotropic case
  and obtain new results by examining the integrability of the Rapcs\'ak
  system extended with curvature conditions.  We consider the $n$-dimensional
  generic case, where the eigenvalues of the Jacobi tensor are pairwise
  different and compute the first and the higher order compatibility
  conditions of the system. We also consider the three-dimensional case, where
  we find a class of non-isotropic sprays for which the PDE system is
  integrable and, consequently, the corresponding SODEs are projective
  metrizable.
\end{abstract}

\bigskip

\begin{description}
\item [2000 Mathematics Subject Classification:] 49N45, 58E30,
  53C60, 53C22.
\item[Key words and phrases:] Euler-Lagrange equation, metrizability,
  projective metrizability,
  \\
  geodesics, spray, formal integrability.
\end{description}

\smallskip

\section{Introduction}
\label{sec:introduction}

The projective metrizability problem can be formulated as follows: under what
conditions the solutions of a given system of second-order ordinary
differential equations coincide with the geodesics of some metric space, as
oriented curves. This problem can be seen as a particular case of the inverse
problem of the calculus of variations.  One can consider the Riemannian
\cite{Bryant_Dunajski_Eastwood_2009, Eastwood_Matveev_2008} and the more
general Finslerian version of this problem \cite{Paiva_2005, BuMu_2011,
  BuMu_2012, Crampin_2011, MiMu_2017, Rapcsak_1961, szilasi_vattamany_2002}.
In this paper we examine the latter: starting with a homogeneous system of
second-order ordinary differential equations, which can be identified with a
spray $S$, we seek for a Finsler metric $F$ whose geodesics coincide with the
geodesics of the spray $S$, up to an orientation preserving
reparameterization. For flat sprays this problem was first studied by Hamel
\cite{Hamel_1903} and it is known as the Finslerian version of Hilbert’s
fourth problem \cite{Paiva_2005, Crampin_2011}.  Rapcs\'ak obtained necessary
and sufficient conditions in the general case \cite{Rapcsak_1961}.

There are different approaches to tackle the problem: In \cite{CrMeSa_2012}
the authors use the multiplier method which is, in the context of the inverse
problem of the calculus of variations, probably the most used and studied
approach. In \cite{BuMu_2011} the projective metrizability problem was
reformulated in terms of a first-order partial differential equation and a set
of algebraic conditions on a semi-basic 1-form. Finally, \cite{MiMu_2017}
turns back to the original idea of Rapcs\'ak by considering the so called
Rapcs\'ak system, which is the PDE system composed by the Euler-Lagrange
partial differential equations and the homogeneity condition on the unknown
Finsler function $F$.  The different approaches are equivalent and can lead to
effective results. For example sprays with isotropic curvature was
investigated in all three different approaches and it was proved that this
class of spray is projective metrizable. Unfortunately, beyond the isotropic
case, there are practically no results about this problem in the literature.

In this paper we make a step in the direction to find new results in the
non-isotropic case by investigating the integrability of the Rapcs\'ak
system. Here the difficulties come from the fact that the PDE system is
largely overdetermined: on an $n$-dimensional manifold there are $n+1$
equations on the unknown Finsler function, therefore many integrability
conditions arise. That is why in the generic case there is no solution to the
problem.  In \cite{MiMu_2017} the first compatibility conditions of the
Rapcs\'ak system were already determined: they can be expressed in terms of
equations containing the associated nonlinear connection and its curvature
tensor.  When the spray is isotropic, these conditions are satisfied.
However, when the spray is non-isotropic, the integrability conditions are not
satisfied and further compatibility conditions appear. From that point, the
analysis becomes quite difficult, because of two reasons. First: the curvature
tensor has no canonical normal form, therefore each class of sprays having
different curvature form must be considered separately. Second: as it has been
shown in \cite{MiMu_2017}, the system containing the curvature condition may
be not $2$-acyclic (the Cartan's test fails), that is higher order
compatibility conditions can arise.

The paper is organized as follows.  In Section \ref{sec:2} we give a brief
introduction to the canonical structures on the tangent bundle of a manifold
and the main structures needed to discuss the geometry of a spray: connection,
Jacobi endomorphism, curvature. We also recall the basic tools of
Cartan-K\"ahler theory.  In Section \ref{sec:3} the extended Rapcs\'ak system
with curvature condition is considered in the $n$-dimensional generic case,
when the eigenvalues of the Jacobi curvature tensor $\Phi$ are pairwise
different: in Subsection \ref{sec:3_1} we compute its first compatibility
conditions and in Subsection \ref{sec:3_2}, using the prolonged system, we
find the higher order compatibility conditions. In Section \ref{sec:4} we
consider the $3$-dimensional case: We prove that the symbol of the prolonged
system is 2-acyclic and discuss in detail the reducible case.  Finally, we
identify a class of non-isotropic sprays for which the second and the third
order conditions are identically satisfied and the symbol of the prolonged
system is 2-acyclic. Therefore we obtain the formal integrability of the
system and, in the analytic case, the projective metrizability of this class
of sprays.  At the end of this section we give an example of non-isotropic
SODEs where the results of Section \ref{sec:4} can be applied to prove the
projective metrizability of the systems.

\section{Preliminaries}
\label{sec:2}

In this paper $M$ denotes an $n$-dimensional smooth manifold, $C^\infty(M)$ is
the ring of the smooth functions on $M$ and $\mathfrak X(M)$ is the
$C^\infty(M)$-module of vector fields on $M$. The set of the skew-symmetric,
symmetric and vector valued $k$-forms are $\Lambda^k(M)$, $S^k(M)$ and
$\Psi^k(M)$, respectively.  Furthermore, $\Lambda^k_v(TM)$ stands for the set
of the semi-basic $k$-forms.  The tangent bundle $(TM,\pi,M)$ and the slashed
tangent bundle
\begin{math}
  (\mathcal{T}M:=TM\setminus \left\{0\right\},\pi,M)
\end{math}
of $M$ will simply be denoted by $TM$ and $\mathcal{T}M$.  The tangent bundle
of $TM$ will be denoted by $TTM$ or $T$.  $VTM=\Ker (\pi_*:TTM\rightarrow TM)$
is the vertical sub-bundle of $T$.  

We denote the coordinates on $M$ by $x\!=\!(x^i)$ and the induced coordinates
on $TM$ by $(x,y)\!=\!(x^i,y^i)$.  The local expressions of the Liouville
vector field $C \in \mathfrak{X}(TM)$ corresponding to the infinitesimal
dilatation in the fibres, and the vertical endomorphism $J\in \Psi^{1}(TM)$
are
\begin{align*}
  C = y^i \frac {\partial}{\partial y^i }, \qquad 
  J = dx^i \otimes \frac {\partial}{\partial y^i }.
\end{align*}
Using Euler's theorem on homogeneous functions $f\in C^{\infty}(\TM)$ is
positive homogeneous of degree $k$ if $\mathcal L_Cf=kf$.

A \textit{spray} on $M$ is a vector field $S\in\mathfrak{X}(\mathcal{T}M)$
satisfying the conditions $JS = C$ and $[C,S]=S$. The coordinate expression of
a spray $S$ takes the form
\begin{equation*}
  \label{eq:S}
  S = y ^i \frac {\partial}{\partial x^i } +f^i (x,y)
  \frac {\partial}{\partial y^i },
\end{equation*}
where the spray coefficients $f^{i}=f^{i}(x,y)$ are 2-homogeneous functions.
The \textit{geodesics of a spray} $S$ are curves $\gamma : I \to M$ such that
$S \circ \dot \gamma = \ddot \gamma$. Locally, they are the solutions of the
second order ordinary differential equations
\begin{math}
  \ddot{x}^i=f^i(x, \dot{x}),
\end{math}
$i=1,\dots ,n$. 

A \emph{Finsler function} on a manifold $M$ is a continuous function
$F\colon TM \to \R$, smooth and positive away from the zero section,
homogeneous of degree 1 and the matrix composed by the coefficients
\begin{math}
  g_{ij}=\frac{1}{2}\frac{\partial^{2} F^{2}}{\partial y^{i}\partial y^{j}}
\end{math}
of the metric tensor $g=g_{ij}dx^i\otimes dx^j$ is positive definite on $\TM$.
Consequently, the Hessian of $F$ is positive quasi-definite in the sense that
\begin{math}
  \frac{\partial^2F}{\partial y^i \partial y^j}v^iv^j \geq 0
\end{math}
with equality only if $v$ is a scalar multiple of $y$ (see
\cite{CrMeSa_2012}).  The pair $(M,F)$ is called Finsler manifold.  To any
Finsler function $F$ there exits a unique canonical spray
$S_{\scriptscriptstyle F}$, such that the geodesics of $F$ are the geodesic of
$S_F$. The canonical spray is characterized by the equation
\begin{math}
  i_{S_{\scriptscriptstyle F}} dd_JF^{2}=-dF^{2}.
\end{math}
A spray $S$ is called \textit{Finsler metrizable} if there exist a Finsler
metric $F$ whose canonical spray is $S_{\scriptscriptstyle F}=S$. A spray $S$
is called \textit{projective Finsler metrizable} if there exist a Finsler
metric $F$ whose canonical spray is projective equivalent to $S$ i.e.
$S_{\scriptscriptstyle F} \sim S$.  In that case the geodesics of the two
sprays coincide up to an orientation preserving reparametrization.  According
to Rapcs\'ak's result \cite{Rapcsak_1961}, the spray $S$ is projective Finsler
metrizable if and only if there exists a Finsler function $F$ such that
$i_{S}\Omega=0$, where $\Omega:=dd_JF$ ($\Omega\!\in\!  \Lambda^2(TM)$).
Consequently, the spray $S$ is projective Finsler metrizable if and only if
the \textit{Rapcs\'ak system}
\begin{equation}
  \label{eq:rap_1}
  \big\{ \mathcal{L}_CF-F=0, \quad  i_{S}\Omega=0\big\}
\end{equation}
admits a positive quasi-definite solution $F$.  The first compatibility
conditions of the partial differential system \eqref{eq:rap_1} were determined
in \cite{MiMu_2017} in terms of geometric objects associated to the spray.  To
present them, let us consider the following notions.  The \textit{connection}
associated to $S$ is the vector valued one form $\Gamma: = [J,S]$.  One has
$\Gamma^{2}=\mathrm{Id}$ and the eigenspaces of $\Gamma$ corresponding to the
eigenvalue $+1$ and $-1$ are the vertical and the horizontal subspaces. The
horizontal bundle is denoted by $HTM$. The horizontal and vertical projectors
associated to the connection are
\begin{math}
  h : = \frac{1}{2}(\textrm{I} + \Gamma )
\end{math}
and
\begin{math}
  v : = \frac{1}{2}(\textrm{I} - \Gamma ).
\end{math}
We have $hS=S$, that is the spray is a horizontal vector field.  The
\textit{curvature} $R \in \Psi^{2}(TM)$ of the connection $\Gamma$ is the
Nijenhuis torsion of the horizontal projection
\begin{math}
  R = \frac{1}{2}[h,h].
\end{math}
The \textit{Jacobi endomorphism} $\Phi$ can be derived from the curvature by
$\Phi:=i_{S}R$. They are also related by the equation
\begin{math}
  \frac{1}{3}[J,\Phi]=R.
\end{math}
In \cite{MiMu_2017} it was proved that the spray $S$ is projective metrizable
if and only if there exists a regular function $F$ on $\TM$ such that it is a solution to
the \emph{extended Rapcs\'ak system}:
\begin{equation}
  \label{eq:rap_2}
  \big\{  \mathcal L_CF-F = 0, \quad i_\Gamma \Omega = 0  \big\}.
\end{equation}
The compatibility conditions of the system \eqref{eq:rap_2} can be expressed a
coordinate free way in terms of the curvature tensor $R$ of $\Gamma$ by the
equation $i_R \Omega=0$. In the case when $\dim M=2$ or $S$ is flat or more of
isotropic curvature that is the flag curvature does not depend on the
direction (cf.~\cite{bucataru_muzsnay_2014, Chern_Shen_2006}), then this
compatibility condition is satisfied and the system \eqref{eq:rap_2} is
integrable. One obtains that in the analytic case the spray $S$ is locally
projective metrizable \cite{MiMu_2017}.

In this article we are investigating the projective metrizability when the
curvature of the spray is non-isotropic. In that case the compatibility
condition of \eqref{eq:rap_2} is not satisfied. This is why one has to
consider an enlarged system by adding to \eqref{eq:rap_2} its compatibility
condition. We remark that instead of the curvature tensor, one can express the
compatibility condition in terms of the Jacobi tensor (see \cite{CrMeSa_2012})
and consider the enlarged the system:
\begin{equation}
  \label{eq:rapcsak_3}
  \mathcal L_CF-F = 0, \qquad i_\Gamma \Omega = 0, \qquad i_\Phi \Omega = 0.  
\end{equation}
Depending on the algebraic form of $\Phi$ there are many cases and sub cases
to consider.  Moreover, there is a new phenomenon appearing: the partial
differential system \eqref{eq:rapcsak_3} is not 2-acyclic \cite[Section
5]{MiMu_2017}, which is the indication of the existence of higher order
integrability condition.  All these informations suggest that the
non-isotropic case may be extremely complex.  We consider the generic case,
when $\Phi$ is diagonalizable with distinct eigenvalues in the following
sense: A function $\lambda \in C^{\infty}(TM)$ is called an eigenfunction and
$X\in \X {TM}$ is an eigenvector field of $\Phi$ if $X$ is a horizontal and
$\Phi_u X_u=\lambda_u JX_u$ for any $u\in \TM$.  It is easy to verify that the
spray $S$ is an eigenvector field of $\Phi$ and the corresponding Jacobi
eigenfunction is $\lambda=0$.  

To investigate the integrability of the system \eqref{eq:rapcsak_3} we use the
Spencer-Goldschmidt's integrability theory. Here we just set the notation. For
more details, we refer to \cite{BCGGG_1991, GrMu_2000} and \cite{MiMu_2017} in
the context of projective metrizability. Let $E=(E,\pi,M)$ be a fibred bundle
over the manifold $M$.  $J_{k}(E)$ denotes the bundle of $k$-jets of sections
of $E$.  Then $J_{k+1}(E)$ become a fibred bundle over $J_k(E)$ with the
projection $\pi_k:J_{k+1}(E)\rightarrow J_k(E)$.  Let $E$ and $\widetilde E$
be vector bundles over the manifold $M$ and
$P:Sec(E)\rightarrow Sec(\widetilde E)$ be a $k^\mathrm{th}$ order
differential operator. Then $P$ can be identified with the map
$p_{k}(P)\colon J_{k}E\rightarrow \widetilde E$ and a natural way the
$l^{\mathrm{th}}$ prolongation can be introduced as
\begin{math}
  p_{k+l}(P)\colon J_{k+l}E\rightarrow J_{l}\widetilde E.
\end{math}
The elements of
\begin{math}
  Sol_{k+l} := \mathrm{Ker}\, p_{k+l}(P)
\end{math}
is called the $l\ts{th}$ order formal solutions.  $P$ is \textit{formally
  integrable} if $Sol_{l}$ is a vector bundle over $M$ for all $l \geq k$, and
the map $\overline{\pi}_{l}: Sol_{l+1}\rightarrow Sol_{l}$ is onto $\forall$
$l \geq k$. In that case any $k^{\mathrm{th}}$ order solution can be lift into
an infinite order formal solution.  The highest order terms of $P$ said to be
the symbol of $P$. It can be interpreted as a map
$\sigma_k\colon S^{k}T^* \! M\otimes E \rightarrow \widetilde E$.  The $l\ts{th}$ order
prolongation of the symbol is denoted as
\begin{math}
  \sigma_{k+l}\colon S^{k+l}T^* \!M\otimes E\rightarrow S^lT^*M\otimes \widetilde E.
\end{math}

The existence or the non-existence of higher order compatibility condition can
be calculated classically with the Cartan's test but more information can be
obtained about the higher order compatibility conditions from the Spencer
cohomology groups.  Classical version of the Cartan-K\"ahler integrability
theorem uses the Cartan's test while its generalization, proved by
Goldschmidt, uses the Spencer cohomology groups.  Let us introduce the two
concepts.  A basis $(e_i)^{n}_{i=1}$ of $T_xM$ is \textit{quasi-regular} if
\begin{math}
  \dim g_{k+1,x}\!=\!\dim g_{k,x}\!+\!\sum^{n}_{j=1}\dim
  (g_{k,x})_{e_1,...,e_j},
\end{math}
where one notes $g_{k+l}:=\mathrm{Ker} \, \sigma_{k+l}$ and
\begin{math}
  (g_{k,x})_{e_1,...,e_j}:=\left\{A \in g_{k,x} | \, i_{e_1} A =0, \dots
    i_{e_j}A=0\right\},
\end{math}
\ $(1\leq j\leq n)$.  The symbol $\sigma_k$ is \textit{involutive} if there
exists a quasi-regular basis of $T_xM$ at any $x \in M$ (Cartan's test).
Moreover, the symbol of $P$ is called \textit{2-acyclic} if for all $m\geq k$
the $(m,2)$ Spencer cohomology groups
\begin{math}
  H^{m,2}=\mathrm{Ker}\, \delta^{m}_{2}/\mathrm{Im}\,\delta^{m}_{1}
\end{math}
vanish where
\begin{alignat*}{1}
  g_{m+1} \!\otimes \! T^*\!M \stackrel{\delta^{m}_{1}}{\longrightarrow}
  g_{m} \!\otimes\! \Lambda^{2}T^*\!M, \qquad
  g_{m} \!\otimes\! \Lambda^2T^*\!M \stackrel{\delta^{m}_{2}}{\longrightarrow}
  g_{m-1}\!\otimes\!  \Lambda^{3}T^*\!M
\end{alignat*}
are the natural skew-symmetrizations. We remark that if $\sigma_k(P)$ is
\textit{involutive} then all Spencer cohomology groups are zero.

\begin{theorem}[Cartan-K{\"a}hler/Goldschmidt]
  \label{Cartan_Kahler}
  Let $P\colon J_kE\rightarrow \widetilde E$ be a $k\ts{th}$ order regular
  linear partial differential operator. If
  \begin{math}
    \overline{\pi}_{k}:Sol_{k+1}\rightarrow Sol_{k}
  \end{math}
  is surjective and $\sigma_k$ is involutive/2-acyclic, then $P$ is formally
  integrable.
\end{theorem}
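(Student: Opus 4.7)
The plan is to establish formal integrability by induction on the order $l \geq k$, showing that the projection $\overline{\pi}_l : Sol_{l+1} \to Sol_l$ is surjective and that $Sol_l$ is a vector bundle over $M$. The base case $l=k$ is supplied by hypothesis. For the inductive step, assuming the conclusion holds up to order $l$, one must lift every $l$-jet formal solution to an $(l+1)$-jet formal solution and check that $Sol_{l+1}$ has locally constant rank.

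The heart of the argument would be a commutative diagram relating the short exact sequences of jet bundles
\begin{equation*}
0 \to S^{l+2}T^*M \otimes E \to J_{l+2}E \to J_{l+1}E \to 0,
\end{equation*}
the analogous sequence for $\widetilde{E}$, the prolonged operators $p_{l+1}(P)$ and $p_{l+2}(P)$, and the symbol maps $\sigma_{l+1}$ and $\sigma_{l+2}$. Via a standard diagram chase one associates to an $l$-jet formal solution a well-defined obstruction class that, after reinterpretation through the Spencer $\delta$-complex on the prolonged symbols, lives in a cohomology group of the form $H^{m,2}$ for some $m \geq k$.

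At this point the hypothesis enters decisively: 2-acyclicity of $\sigma_k$ amounts precisely to the vanishing of all such groups $H^{m,2}$ for $m\geq k$, so the obstruction class is zero and the desired lift exists. Regularity of $P$, together with the vector bundle structure of $Sol_l$ inherited from the inductive hypothesis, then forces $Sol_{l+1}$ itself to be a vector bundle of locally constant rank. The involutive case is absorbed into the 2-acyclic one by invoking the Serre--Guillemin theorem, which asserts that an involutive symbol has trivial Spencer $\delta$-cohomology in all positive degrees and is in particular 2-acyclic.

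The main technical obstacle I anticipate is producing the cohomological identification of the obstruction. Concretely, one must verify that the failure of a preliminary lift to satisfy $p_{l+2}(P)=0$ is automatically $\delta$-closed, and that any two preliminary lifts differ by a $\delta$-coboundary; only then does the obstruction define an element of $H^{m,2}$ that can be killed by the 2-acyclicity hypothesis. This bookkeeping requires tracking several interlocking maps between $J_{l+2}E$, $S^{l+2}T^*M\otimes E$ and the symbol cokernels of $\sigma_{l+1}, \sigma_{l+2}$. Once this cohomological reformulation is in place the induction closes at once, every $k$-jet solution extends to an infinite-order formal solution, and $P$ is formally integrable.
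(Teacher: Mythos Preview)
The paper does not prove this statement at all: Theorem~\ref{Cartan_Kahler} is quoted in the preliminaries as a classical result, with references to \cite{BCGGG_1991} and \cite{GrMu_2000}, and no argument is supplied. So there is no ``paper's own proof'' to compare your attempt against.

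That said, your outline is a faithful sketch of the standard Goldschmidt argument one finds in those references: induction on the order, identification of the obstruction to lifting with a class in $H^{m,2}$ via the Spencer $\delta$-complex, and the reduction of the involutive case to the 2-acyclic one through the vanishing of Spencer cohomology. The one point on which you should be a bit more careful is the role of regularity: in Goldschmidt's theorem the hypothesis that $Sol_l$ be a vector bundle for all $l$ is not automatically propagated by the induction from the surjectivity of $\overline{\pi}_l$ alone; one typically assumes constant rank of the symbol prolongations $g_{k+l}$ (equivalently, regularity of $P$ in the sense used here) as part of the standing hypotheses rather than deriving it. Apart from this bookkeeping caveat, your plan matches the classical proof and would be acceptable as a summary of it.
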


\begin{remark}[Computation of the first compatibility condition]
  \label{sec:remark_surjectivity} \
  \\
  In the practice the surjectivity of $\overline{\pi}_{k}$, or in other words
  the first compatibility condition, can be computed as follows: Using the
  snake lemma of homological algebra one can show that there exits a morphism
  $\varphi$ such that the sequence
  \begin{equation}
    \label{eq:varphi}
    Sol_{k+1}\xrightarrow{ \ \overline{\pi}_k \ } Sol_k\xrightarrow{ \ \varphi
      \ } \mathrm{Coker}(\sigma_{k+1})
  \end{equation}
  is exact.  From the exactness of \eqref{eq:varphi} we get that
  $\overline{\pi}_k$ is onto if and only if $\varphi=0$.  The partial
  differential equation $\varphi=0$ is called the \emph{first compatibility
    condition} of $P$.  To compute $\varphi$ we note that if
  $\tau:T^{*}\otimes \widetilde E\rightarrow K$ is a morphism such that
  $\mathrm{Ker}\, \tau=\mathrm{Im} \, \sigma_{k+1}$ then $\mathrm{Im}\, \tau$
  is isomorphic to $\mathrm{Coker}(\sigma_{k+1})$ and
  \begin{equation}
    \label{eq:int_cond}
	\varphi=\tau \circ \nabla P|_{Sol_k} ,
  \end{equation}
  where $\nabla$ is an arbitrary linear connection on $F$ (see
  \cite[p.~28]{GrMu_2000}).
\end{remark}

\section{Extended Rapcs\'ak system with curvature condition}
\label{sec:3}

Let $S$ be a spray on $M$ and 
\begin{math}
  \mathcal P \colon C^\infty(TM) \longrightarrow C^\infty(TM)\times
  \Lambda_v^2(TM)\times \Lambda_v^2(TM)
\end{math}
the differential operator corresponding to the second order linear partial
differential system \eqref{eq:rapcsak_3}:
\begin{equation}
  \label{eq:P}
  \mathcal{P}:=(P_{\Gamma}, P_C, P_{\Phi})
\end{equation}
where
\begin{displaymath}
  P_CF:= \mathcal{L}_CF-F, \qquad
  P_\Gamma F:= i_\Gamma \Omega, \qquad
  P_\Phi F:= i_\Phi \Omega,
\end{displaymath}
with $\Omega:=dd_JF$. We suppose that the Jacobi endomorphism $\Phi$ has $n$
distinct eigenvalues. In this chapter we compute the first integrability
conditions of order 2, and the the higher order compatibility condition of
order 3 of system \eqref{eq:P}.  It turns out that in some cases, even though
the Cartan's test fails, these compatibility conditions give the complete set
of obstructions to the integrability and in very specific situations the
system becomes integrable (see Chapter \ref{sec:4}).

\subsection{First compatibility conditions}
\label{sec:3_1}

First we remark that $hS=S$ that is the spray is horizontal with respect to
the connection associated. Moreover, from the definition of the Jacobi
endomorphism we get
\begin{equation}
  \label{eq:Phi_eigenvalue}
 \Phi(S) = i_SR(S)=R(S,S)=0,
\end{equation}
that is $S$ is an eigenvector of $\Phi$ and the corresponding eigenvalue is
$\lambda=0$.  Let $\lambda_1, \dots ,\lambda_n$ be the $n$ distinct
eigenfunctions of $\Phi$ and $h_1$, \dots, $h_n$ the corresponding eigenvector
fields, where $\lambda_n\!=\!0$ and $h_n\!=\!S$. For any $x\in \TM$ we
consider the basis
\begin{equation}
  \label{eq:basis_1}
  \mathcal B:=\left\{h_1,\dots ,h_n,v_1,\dots,v_n\right\} \quad \subset T_xTM,
\end{equation}
where $Jh_i\!=\!v_i$, $i\!=\!1,\dots,n$.  We have the following
\begin{proposition}  
  \label{prop:2_order_integr_cond}
  A 2\ts{nd} order solution $s=j_{2}(F)_x$ of $\mathcal{P}$ at
  $x\in \mathcal{T}M$ can be lifted into a 3\ts{rd} order solution, if and
  only if
  \begin{alignat}{1}
    \label{iphiphi}
    i_{[\Phi,\Phi]}\Omega_x=0,
    \\
    \label{omegavihjhk}
	\sum_{ijk}^{cycl}(\Omega_x([v_i,h_j],h_k))_x=0.
  \end{alignat} 
\end{proposition}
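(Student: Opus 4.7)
The plan is to apply the machinery of Remark \ref{sec:remark_surjectivity}: compute the cokernel of the prolonged symbol $\sigma_3(\mathcal P)$ via an explicit surjection $\tau$, and then evaluate $\varphi=\tau\circ \nabla \mathcal P$ on $Sol_2$. Since $P_C$ is of first order, only $P_\Gamma$ and $P_\Phi$ contribute to the second order symbol $\sigma_2:S^2T^*TM\to \Lambda_v^2(TM)\oplus \Lambda_v^2(TM)$, so the analysis reduces to understanding the symbol of the pair $(P_\Gamma,P_\Phi)$.

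I would work throughout in the frame $\mathcal B=\{h_1,\dots,h_n,v_1,\dots,v_n\}$, which is well defined thanks to the distinct-eigenvalue hypothesis on $\Phi$. In this frame the equation $P_\Gamma F=0$ translates into the vanishing of $\Omega=dd_JF$ on the pairs $(h_i,h_j)$ and on the pairs $(v_i,v_j)$, while $P_\Phi F=0$ translates into linear relations, weighted by the eigenvalues $\lambda_i$, on the mixed components $\Omega(h_i,v_j)$. From these relations I would compute $\sigma_3$ explicitly and construct $\tau$ whose kernel equals $\Im \sigma_3$; the cokernel splits naturally into two pieces whose images under $\tau\circ \nabla \mathcal P$ will be the two equations of the Proposition.

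For the first piece, the key ingredient is a Fr\"olicher--Nijenhuis identity of the form $2\,i_\Phi i_\Phi\,\omega=i_{[\Phi,\Phi]}\omega$ on $2$-forms: a second application of $i_\Phi$ to the equation $i_\Phi\Omega=0$ produces, modulo the equations of the system itself, the obstruction \eqref{iphiphi}. For the second piece I would exploit that $\Omega=dd_JF$ is closed. Evaluating $d\Omega=0$ on suitable triples of basis vectors from $\mathcal B$ and decomposing the Lie brackets $[h_i,h_j]$ and $[v_i,h_j]$ into horizontal and vertical parts — where the vertical parts of horizontal brackets are governed by the curvature $R$ and, via $\Phi=i_SR$ together with the distinctness of eigenvalues, can be controlled on $Sol_2$ — yields, after reduction modulo $P_\Gamma F=0$ and $P_\Phi F=0$, the cyclic identity \eqref{omegavihjhk}.

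The hard part will be verifying that the proposed $\tau$ really exhausts $\mathrm{Coker}(\sigma_3)$, that is, that \eqref{iphiphi} and \eqref{omegavihjhk} together form a complete and minimal list of obstructions. This reduces to a dimension count of $\Ker \sigma_3$ and $\Im \sigma_3$ in the frame $\mathcal B$, and the distinct-eigenvalue hypothesis is essential: it prevents collapses among the candidate obstructions and ensures that the eigen-components of $i_\Phi\Omega$ decouple. Once the dimension count is settled, exactness of \eqref{eq:varphi} yields that a $2$\ts{nd} order solution $s=j_2(F)_x$ lifts to a $3$\ts{rd} order solution iff $\varphi(s)=0$, which by construction is the conjunction of \eqref{iphiphi} and \eqref{omegavihjhk}.
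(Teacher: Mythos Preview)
Your overall strategy---construct $\tau$ with $\Ker\tau=\Im\sigma_3(\mathcal P)$, verify this by a dimension count in the adapted frame $\mathcal B$, and then evaluate $\tau\circ\nabla\mathcal P$ on $Sol_2$---is exactly the paper's approach. However, two points in your execution would not go through as written.

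First, the claim that ``only $P_\Gamma$ and $P_\Phi$ contribute'' is too quick. Even though $P_C$ is first order, its prolongation $\sigma_2(P_C)\colon A_2\mapsto A_2(\cdot,C)$ is nonzero and enters $\sigma_3(\mathcal P)$. The paper's obstruction map $\tau$ is not two-component but has eight pieces $(\tau_1,\dots,\tau_7,\tau_{ijk})$, several of which (namely $\tau_3,\tau_6,\tau_7$) genuinely couple the $B_C$-slot with $B_\Gamma$ and $B_\Phi$. All eight are needed for the dimension count $\nul(\tau)=\rank\sigma_3(\mathcal P)$ to close; with only two pieces you would not get $\Ker\tau=\Im\sigma_3(\mathcal P)$. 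On evaluation, five of these pieces happen to vanish on $Sol_2$, and a sixth gives $i_R\Omega$, but you cannot skip them a priori.

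Second, the identity ``$2\,i_\Phi i_\Phi\omega=i_{[\Phi,\Phi]}\omega$ on $2$-forms'' is not correct: since $\Phi$ is a $(1,1)$-tensor, $i_\Phi$ preserves form degree, so the left side is a $2$-form while $i_{[\Phi,\Phi]}\omega$ is a $3$-form. The paper obtains \eqref{iphiphi} not from a second $i_\Phi$ but from a $d_\Phi$: one computes $\tau_4(\nabla\mathcal PF)=d_\Phi(i_\Phi\Omega)=d_\Phi d_\Phi d_JF+d_\Phi d\,i_\Phi d_JF=\tfrac12 d_{[\Phi,\Phi]}d_JF=\tfrac12 i_{[\Phi,\Phi]}\Omega$, using the Fr\"olicher--Nijenhuis relation $d_\Phi d_\Phi=\tfrac12 d_{[\Phi,\Phi]}$. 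Your derivation of \eqref{omegavihjhk} via $d\Omega=0$ on triples $(v_i,h_j,h_k)$ is, on the other hand, exactly what the paper does in the $\tau_{ijk}$ component.
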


\noindent
To compute the first compatibility conditions we use the method described in
Remark \ref{sec:remark_surjectivity}.  The symbol of the system $\P$ is
composed by the symbol of the operator $P_C$, $P_\Gamma$ and $P_\Phi$.  The
symbol of the first order operator $P_C$ is
\begin{alignat*}{2}
  \sigma_1(P_C)\colon & T^* \to \R, \qquad \quad &&   \sigma_1( P_C)A_1 =
  A_1(C).
  \intertext{The prolongation of the symbol of $P_C$ and the symbol of
    the second order $P_\Gamma$ and $P_{\Phi}$ are}
  \sigma_2(P_C)\colon & S^2T^* \to T^*, \qquad \quad &&
  \big(\sigma_2(P_C)A_2\big)(X) = A_2(X, C),
  \\
  \sigma_{2}(P_{\Gamma})\colon & S^2T^* \to \Lambda^{2}T^{*}_{v},
  \quad && \big(\sigma_2(P_\Gamma)A_2\big)(X,Y) =
  2\big(A_2(hX,JY)\!-\!A_2(hY,JX)\big),
  \\
  \sigma_{2}(P_{\Phi})\colon & S^2T^* \to \Lambda^{2}T^{*}_{v},
  \quad && \big(\sigma_2(P_\Phi)A_2\big)(X,Y) = A_2(\Phi X,JY)\!-\!A_2(\Phi
  Y,JX),
  \intertext{and their prolongations at third order level are}
  \sigma_3(P_C)\colon & S^3T^* \to S^{2}T^*, \quad &&
  \big(\sigma_3(P_C)A_3\big)(X,Y) = A_3(X,Y, C),
  \\
  \sigma_{3}(P_{\Gamma})\colon & S^3T^* \to T^{*}\!\!\otimes\!
  \Lambda^{2}T^{*}_{v}, \ && \big(\sigma_3(P_\Gamma)A_3\big)(X,Y,Z) \!=\!
  2\big(A_3(X,hY,JZ)\!-\!A_3(X,hZ,JY)\big),
  \\
  \sigma_{3}(P_{\Phi})\colon & S^3T^* \to T^* \! \otimes\!
  \Lambda^{2}T^{*}_{v}, \ && \big(\sigma_3(P_\Phi)A_3\big)(X,Y,Z) \!=\!
  A_3(X,\Phi Y,JZ)\!-\!A_3(X,\Phi Z,JY),
\end{alignat*}
where $X,Y,Z\! \in\! T$, $A_i \!\in\! S^{i}T^{*}$, $i=1,2,3$. Therefore  
\begin{alignat*}{1}
  \sigma_2(\mathcal{P})&=\big(\sigma_2(P_C), \sigma_2(P_\Gamma),
  \sigma_2(P_\Phi)\big) \colon S^2T^*\longrightarrow T^{*} \times
  \Lambda^{2}T^{*}_{v} \times \Lambda^{2}T^{*}_{v},
  \\
  \sigma_3(\mathcal{P})&=\big(\sigma_3(P_\Gamma), \sigma_3(P_C),
  \sigma_3(P_\Phi)\big) \colon S^3T^*\longrightarrow S^{2} T^{*} \times (T^{*}
  \o\Lambda^{2}T^{*}_{v})\times (T^{*}\o \Lambda^{2}T^{*}_{v}).
\end{alignat*}
According to Remark \ref{sec:remark_surjectivity}, to compute the
compatibility condition we should construct a map $\tau$ such that
$\Ker \tau\!=\!\Im \sigma_3(\mathcal{P})$.  Let us consider the map
\begin{equation}
  \label{eq:tau}
  \tau :=  (\tau_1, \tau_2,\, \tau_3,  \tau_4, 
  \tau_5,  \tau_6,  \tau_7, \tau_{ijk}),
\end{equation}
as follows: if $B=(B_C, B_\Gamma, B_\Phi)$ is an element of
\begin{math}
  S^{2} T^{*} \! \times (T^{*} \o\Lambda^{2}T^{*}_{v}) \times (T^{*}\o
  \Lambda^{2}T^{*}_{v})
\end{math}
then
\begin{alignat*}{2}
  &\tau_1(B)(X,Y,Z)=B_{\Gamma}(hX, Y, Z) + B_{\Gamma}(hY, Z, X)
  +B_{\Gamma}(hZ, X, Y),
  \\
  &\tau_2 (B)(X,Y,Z)=B_{\Gamma}(JX, Y, Z)
  +B_{\Gamma}(JY, Z, X) +B_{\Gamma}(JZ, X, Y),
  \\
  &\tau_3(B)(X,Y)=\tfrac{1}{2}B_\Gamma (C,X,Y)-B_C(hX,JY)+B_C(hY,JX),
  \\
  &\tau_4(B)(X,Y,Z)=B_{\Phi}(\Phi X, Y, Z) + B_{\Phi}(\Phi Y, Z,
  X) +B_{\Phi}(\Phi Z, X, Y),
  \\
  &\tau_5 (B)(X,Y,Z)=B_{\Phi}(JX, Y, Z) +B_{\Phi}(JY, Z, X) +B_{\Phi}(JZ, X,
  Y),
  \\
  &\tau_6 (B)(Y,Z)=B_{\Phi}(C, Y,Z)-B_C(\Phi Y, JZ)+B_C(\Phi Z,JY),
  \\
  &\tau_7 (B)(X,Y)=B_{\Phi}(X,Y,S)-B_C(X,\Phi Y),
  \\
  & \tau_{ijk}(B)= \tfrac{1}{2}B_\Gamma(v_i,h_j,h_k)+
  \tfrac{1}{\lambda_k\!-\!\lambda_i}B_\Phi(h_j, h_i,h_k)+ \tfrac{1}{\lambda_i
    \!-\! \lambda_j}B_\Phi (h_k, h_i,h_j),
\end{alignat*}
where $i,j,k$ $(1\leq i,j,k \leq n)$ are pairwise different indices.

\begin{lemma}
  With the above notation we have $\Ker \tau\!=\!\Im \sigma_3(\mathcal{P})$.
\end{lemma}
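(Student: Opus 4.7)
The plan is to prove $\Ker\tau = \Im\sigma_3(\mathcal{P})$ by establishing both inclusions separately.

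For the inclusion $\Im\sigma_3(\mathcal{P}) \subseteq \Ker\tau$, I would substitute $B=\sigma_3(\mathcal{P})(A_3)$, for an arbitrary $A_3 \in S^3T^*$, into each of $\tau_1,\dots,\tau_7$ and $\tau_{ijk}$, and verify the vanishing directly. The cyclic components $\tau_1,\tau_2,\tau_4,\tau_5$ become cyclic sums over three arguments that collapse to zero by the symmetry of $A_3$ after regrouping by the multiset of arguments. The components $\tau_3,\tau_6,\tau_7$ additionally exploit $C=JS$ and $\Phi S=0$; for instance $\tau_7$ reduces to $A_3(X,\Phi Y,C)-A_3(X,\Phi Y,C)=0$. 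For $\tau_{ijk}$ the eigenvector relation $\Phi h_i=\lambda_i v_i$ rewrites the two $B_\Phi$-terms as $-A_3(h_j,v_i,v_k)$ and $A_3(h_k,v_i,v_j)$ respectively, and these cancel the contribution $A_3(v_i,h_j,v_k)-A_3(v_i,h_k,v_j)$ coming from $\tfrac12 B_\Gamma(v_i,h_j,h_k)$ by the symmetry of $A_3$.

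For the converse inclusion $\Ker\tau \subseteq \Im\sigma_3(\mathcal{P})$, the plan is to construct an explicit preimage. Given $B=(B_C,B_\Gamma,B_\Phi)\in\Ker\tau$, I would build $A_3\in S^3T^*$ by prescribing its values on triples from the basis $\mathcal B=\{h_1,\dots,h_n,v_1,\dots,v_n\}$ and extending by symmetry. The natural rules are: $A_3(X,v_j,v_k):=B_\Phi(X,h_j,h_k)/(\lambda_j-\lambda_k)$ for $j\ne k$, coming from the $P_\Phi$-equation; the differences $A_3(X,h_j,v_k)-A_3(X,h_k,v_j):=\tfrac12 B_\Gamma(X,h_j,h_k)$, coming from the $P_\Gamma$-equation; $A_3(X,Y,v_n):=B_C(X,Y)$ for components containing $C=v_n$, coming from the $P_C$-equation; and the remaining unconstrained pieces set to zero.

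The main obstacle is verifying that this prescription is well-defined as a symmetric tensor and that the three equations are then simultaneously satisfied. Several components of $A_3$ are determined by more than one of the rules above, and each overlap must be absorbed by precisely one of the identities $\tau_i(B)=0$. Specifically, $\tau_{ijk}(B)=0$ is exactly the condition guaranteeing that $A_3(h_j,v_i,v_k)$ obtained from the $P_\Gamma$-rule matches the value dictated by the $P_\Phi$-rule. The cyclic identities $\tau_1,\tau_2$ and $\tau_4,\tau_5$ enforce the symmetry of $A_3$ on triple-horizontal and triple-vertical arguments; combining $\tau_4=0$ with $\tau_5=0$ yields, for example, the relation $(\lambda_l-\lambda_k)B_\Phi(v_l,h_j,h_k)=(\lambda_j-\lambda_k)B_\Phi(v_j,h_l,h_k)$ needed for the triple-vertical rule to be symmetric in all three indices. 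Finally $\tau_3,\tau_6,\tau_7$ ensure compatibility between the $B_C$-rule and the $B_\Gamma$-, $B_\Phi$-rules wherever $C=v_n$ appears; here the distinguished eigenvector $h_n=S$ with $\lambda_n=0$ requires careful bookkeeping, since this is where the $P_C$-equation meets the eigenvalue structure.
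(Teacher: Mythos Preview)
Your verification of the inclusion $\Im\sigma_3(\mathcal{P})\subseteq\Ker\tau$ is correct and matches what the paper does (the paper simply states that $\tau\circ\sigma_3(\mathcal{P})=0$ is ``easy to check'').

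For the reverse inclusion your approach is genuinely different from the paper's. The paper does not construct a preimage; instead it argues by a dimension count. Using the adapted basis \eqref{eq:basis_1} it determines the free tensor components of an element of $g_3(\mathcal{P})=\Ker\sigma_3(\mathcal{P})$, obtaining $\dim g_3(\mathcal{P})=\Cr{n,3}+\Cr{n-1,3}+2\Cr{n-1,1}$ and hence $\rank\sigma_3(\mathcal{P})=(6n^3+9n^2-9n+12)/6$. Independently it counts, block by block, the number of independent equations among $\tau_1=\cdots=\tau_7=\tau_{ijk}=0$ by exhibiting pivot components, and finds $\nul(\tau)$ equal to the same number. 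Since $\Im\sigma_3(\mathcal{P})\subseteq\Ker\tau$ is already known, equality of dimensions finishes the proof.

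Your constructive route is sound in principle and has the virtue of explaining \emph{why} each component $\tau_i$ is needed: each one absorbs exactly one overdetermination in the prescription of $A_3$. However, the bookkeeping you flag as ``the main obstacle'' is substantial. In particular, the $B_\Gamma$-rule only fixes the alternating part $A_3(X,h_j,v_k)-A_3(X,h_k,v_j)$, so you must also prescribe the symmetric part and then verify full $S^3$-symmetry of the resulting tensor across the three slots (e.g.\ that $A_3(h_l,h_j,v_k)=A_3(h_j,h_l,v_k)$, which mixes the $B_\Gamma$-rule for different first arguments via $\tau_1,\tau_2$). Similarly you must check that every component of $B_C,B_\Gamma,B_\Phi$ is actually hit, not just the ones used to \emph{define} $A_3$. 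None of this is wrong, but it amounts to redoing the paper's pivot analysis in disguise; the dimension count is considerably shorter and less error-prone.
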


\begin{proof}
  It is easy to check that $\sigma_3(\mathcal{P}) \circ \tau =0$ therefore 
  \begin{math}
    \Im \sigma_3(\mathcal{P})\subset \mathrm{Ker} \ \tau.
  \end{math}
  Let us compute the dimension of the two spaces.  First we determine the
  dimension of
  \begin{math}
    \Ker \sigma_3 (\mathcal{P})
  \end{math}
  by computing the free tensor components of its elements.  We denote in the
  sequel the components of a symmetric tensor $A\in S^k T^*$ with respect to
  \eqref{eq:basis_1} as
  \begin{equation}
    \label{eq:B_notation}
    A_{i_1\dots i_j \, \u{i_{j+1}}\dots \u{i_k}}:=
    A(h_{i_1}, \dots, h_{i_j},v_{i_{j+1}},\dots, v_{i_k}),
  \end{equation}
  that is the index with respect to vertical vector will be underlined.  Let
  us compute $\dim (g_3(\mathcal{P}))$.  For a symmetric tensor
  $A\in S^{3}T^{*}$ we have
  \begin{subequations}
    \begin{align}
      \label{eq:symb_1}
      A \in \Ker \sigma_3(P_C) \quad \Leftrightarrow \quad & A_{ij\u n}= 0, \
      A_{i\u{j n}}= 0, \ A_{\u{i j n}}=0, \quad &&
      \\
      \label{eq:symb_2}
      A \in \Ker \sigma_3(P_\Gamma) \quad \Leftrightarrow \quad & A_{ij \u
        k}=A_{ik \u j}, \ A_{\u i j \u k}=A_{\u i k \u j}, \quad &&
      i,j,k=1,\dots ,n,
      \\
      \label{eq:symb_3}
      A\in \Ker \sigma_3(P_{\Phi})\quad \Leftrightarrow \quad & A_{\u{ijk}}=0,
      \ A_{i \u{jk}}=0, \quad && i,j,k=1,\dots ,n, \ j\neq k
    \end{align}
  \end{subequations}
  and we use the notation $\C{n,k}\=\binom{n}{k}$ and
  $\Cr{n,k}\=\binom{n+k-1}{k}$.  We have
  \begin{math}
    \Ker \sigma_3 (\mathcal{P})=\Ker \sigma_3(P_C) \,\cap\,\Ker \sigma_3
    (P_{\Gamma}) \, \cap\, \Ker \sigma_3(P_{\Phi})
  \end{math}
  therefore, the tensor components of $A \in g_3(\mathcal{P})$ must satisfy
  all the equations \eqref{eq:symb_1}--\eqref{eq:symb_3}. For
  $1\leq i,j,k\leq n$ we have
  \begin{packed_enumerate}
  \item [$\cdot$] the $(A_{ijk})$ block is totally symmetric therefore, there
    are $\Cr{n,3}$ free components,
  \item [$\cdot$] the $(A_{ij\u{k}})$ block is totally symmetric, because of
    \eqref{eq:symb_2}. Moreover, from \eqref{eq:symb_1} we have
    $A_{ij\u{n}}=A_{in\u{j}}=A_{ni\u{j}}=0$. Therefore, we have $\Cr{n-1,3}$
    free components.
  \item [$\cdot$] there is only $\Cr{n-1,1}$ free components for each of the
    blocks $(A_{i\u{jk}})$ and $(A_{\u{ijk}})$. These are $A_{i\u{ii}}$ and
    $A_{\u{iii}}$, $i=1,...,n-1$.
  \end{packed_enumerate}
  Hence we obtain that
  \begin{math}
    \dim g_3(\mathcal{P})=\Cr{n,3}+\Cr{n-1,3}+2\,\Cr{n-1,1}
  \end{math}
  and
  \begin{equation}
    \label{eq:rang_sigma_3_P}
	\mathrm{rank}\, \sigma_3(\mathcal{P})  =\dim S^{3}T^{*}
    -\dim g_3(\mathcal{P})=\frac{6n^{3}+9n^{2}-9n+12}{6}.
  \end{equation}
  On the other hand, let us compute
  $\nul (\tau)\! =\! \dim \mathrm{Ker}\, \tau$. If
  $B=(B_C, B_\Gamma, B_\Phi)\in \mathrm{Ker}\, \tau$ then, using the notation
  \eqref{eq:B_notation} for the tensor components we have
  \begin{packed_enumerate}
  \item [$\cdot$] the pivot terms for $\tau_1=0$ and $\tau_2=0$ are
    $B^{\Gamma}_{ijk}$ and $B^{\Gamma}_{\u i j k}$, $i<j<k$ hence the there
    are $2\,\C{n,3}$ independent equations here.
  \item [$\cdot$] The pivots for equation $\tau_3=0$ are
    $B^{\Gamma}_{\u nij}$, $i<j$ giving $\C{n,2}$ independent equations.
  \item [$\cdot$] The pivot terms for $\tau_4=0$ and $\tau_5=0$ are
    $B^{\Phi}_{\u i jk}$, $i\<j\<k\<n$ and $k\<i\<j\<n$ respectively. Hence we
    have here $2\,\C{n-1,3}$ independent equations.
  \item [$\cdot$] The pivot terms for $\tau_6=0$ are $B^{\Phi}_{\u ij}$,
    $i<j$. It gives $\C{n,2}$ independent equations.
  \item [$\cdot$] The pivot terms for $\tau_7=0$ are $B^{\Phi}_{\u i j n}$,
    $i,j\neq n$ and $B^{\Phi}_{ijn}$, $j\neq n$.  It gives
    $\C{n-1,1}\,\C{n-1,1}+\C{n,1}\,\C{n-1,1}$ independent equations.
  \item [$\cdot$] The equations \ $\tau_{ijk}=0$ \ are not all independent,
    because, for any $i,j,k$ we have
    \\
    \begin{math}
      \sum_{cycl}\left(\tfrac{1}{2}B^\Gamma_{\u ijk}+
      \frac{1}{\lambda_k-\lambda_i}B^\Phi_{jik}+
      \frac{1}{\lambda_i-\lambda_j}B^\Phi_{kij} \right) = 0. 
    \end{math}
    Taking into consideration these relations, the pivots terms are
    $B^\Gamma_{\u k ij}$, $i\<j\<k\<n$ and $B^\Gamma_{\u j ki}$, $i\<j\<k$,
    and there are $\C{n-1,3}+\C{n,3}$ independent equations.
  \end{packed_enumerate}
  Subtracting from the dimension of the domain of $\tau$ the number of
  in\-de\-pen\-dent equations we find
  \begin{equation}
    \label{pivots2}
    \begin{aligned}
      \nul (\tau)& = \Cr{2n,2}+ 2\!\cdot \!2n\, \C{n,2}
      -\bigg(2\,\C{n,3}\+\C{n,2}\+2\,\C{n-1,3}\+\C{n,2} +
      \\
      &+\C{n-1,1}\,\C{n-1,1} \+ \C{n,1}\,\C{n-1,1} \+ \C{n-1,3} \+
      \C{n,3}\bigg) =\frac{6n^{3}+9n^{2}-9n+12}{6}.
	\end{aligned}
  \end{equation}
  Comparing \eqref{eq:rang_sigma_3_P} and \eqref{pivots2} we get the statement
  of the lemma.
\end{proof}

\bigskip

\noindent
\emph{Proof of Proposition \ref{prop:2_order_integr_cond}}. The following commutative
diagram shows the maps introduced above:
\begin{displaymath}
  \begin{CD}
    g_3(\mathcal{P})@>i>> S^{3}T^{*} @>\sigma_3(\mathcal{P})>> S^{2}T^{*}
    \! \times \! (T^{*} \o \Lambda^{2}T^{*}_{v})\!\times \!(T^{*}
    \o\Lambda^{2}T^{*}_{v})@>\tau>> K \longrightarrow 0
    \\
    @VVV @VV \epsilon V @VV\epsilon V
    \\
    Sol_3(\mathcal{P}) @> i >> J_3\mathbb{R} @>p_3(\mathcal{P})>>
    J_2(\RTM)\times J_1(\Lambda^{2}T^{*}_{v}\times \Lambda^{2}T^{*}_{v})
    \\
    @VV \overline{\pi}_2V @VV \pi_2 V @VV V
    \\
    Sol_2(\mathcal{P}) @> i >> J_2\mathbb{R} @>p_2(\mathcal{P})>>
    J_1(\RTM)\times \Lambda^{2}T^{*}_{v}\times \Lambda^{2}T^{*}_{v}
  \end{CD}
\end{displaymath}
To compute the first compatibility condition of the system represented by $\P$
we use the method described in Remark \ref{sec:remark_surjectivity}.  Let $F$
be a second order solution of $\mathcal{P}$ at a point $x$, that is
$(\mathcal{P}F)_x=0$. We have
\begin{equation}
  \label{eq:P_3_x}
  (\L_CF\!-\!F)_x\!=\!0, \quad   \big(\nabla(\L_CF\!-\!F)\big)_x\!=\!0, 
  \quad  (i_\Gamma dd_JF)_x\!=\!0,  \quad (i_{\Phi}dd_JF)_x=0.
\end{equation}
Using the map $\tau$ given in \eqref{eq:tau} the integrability condition is
given by formula \eqref{eq:int_cond}:
\begin{enumerate}
\item
  \begin{math}
    \begin{aligned}[t]
      \displaystyle \tau_1(\nabla \P F)_x& =d_{h}(i_{\Gamma}dd_JF)_x
      =(d_hi_{2h-I}dd_JF)_x =(2(d_hi_hdd_JF-d_hdd_JF))_x=
      \\
      &=(2d_h(i_hd-di_h)d_JF)_x=(2d_hd_hd_JF)_x\frac{}{}
      =(d_Rd_JF)_x=(i_R\Omega)_x.
    \end{aligned}
  \end{math}
\item 
  \begin{math}
    \begin{aligned}[t]
      \tau_2 (\nabla & \P F)_x= d_{J}(i_{\Gamma} \Omega)_x \! = \!
      (d_J(i_{2h-I}\Omega))_x\!=\!(2d_Ji_hdd_JF-2d_Jdd_JF)_x
      \\
      &\!=\!(-2d_Ji_hd_JdF \!-\!2i_Jddd_JF\!+\!2di_Jdd_JF)_x
      \!=\!-(2d_J(d_Ji_hdF+d_JdF))_x\!=\!0
    \end{aligned}
    \medskip
  \end{math}
  \\
  $\frac{}{}$ \qquad where we used $[d,d_J]=0$, $[i_h,d_J]=d_{Jh}-i_{[h,J]}$
  and $[J,h]=0$.
\item
  \begin{math}
    \begin{aligned}[t]
      \tau_3(\nabla & \P F )_x(X,Y)
      =\tfrac{1}{2}\nabla i_{\Gamma}\Omega(C,X,Y)-\nabla P_CF(hX,JY)+\nabla
      P_CF(hY,JX)
      \\
      &=\tfrac{1}{2}d_Ci_{\Gamma}\Omega(hX,hY)
      -\tfrac{1}{2}i_{\Gamma}d_Cdd_JF(hX,hY)
      =\tfrac{1}{2}d_{[C,\Gamma]}\Omega(hX,hY) \stackrel{[C,\Gamma]=0}{=}0.
    \end{aligned}
  \end{math}
\item 
  \begin{math}
      \begin{aligned}
        \tau_4(\nabla \P F)_x =(d_{\Phi}(i_{\Phi}\Omega))_x
        =(d_{\Phi}d_{\Phi}d_JF+d_{\Phi}di_{\Phi}d_JF)_x
        =\tfrac{1}{2}d_{[\Phi,\Phi]} d_JF_x =\tfrac{1}{2}i_{[\Phi,\Phi]}
        \Omega_x.
      \end{aligned}
\end{math}

\item
  \begin{math}
    \begin{aligned}
      \tau_5(\nabla \P F)_x=(d_J(i_{\Phi}\Omega))_x =
      (-i_{[J,\Phi]}\Omega+i_{\Phi}d_J\Omega)_x=(-3i_R\Omega)_x=0
    \end{aligned}
  \end{math}
  \vspace{10pt}
  \\
  $\frac{}{}$ \qquad because we have the identity
  $[i_{\Phi},d_J]=d_{J\Phi}-i_{[\Phi,J]}$.
\item Using the notation $F_c:=CF-F$, we have
    \begin{equation*}
      \begin{aligned}
        i_{\Phi}dd_JF_c(X,Y)_x=dd_JF_c(\Phi X,Y)-dd_JF_c(\Phi Y,X)=\Phi
        X(JYF_c)-\Phi Y(JXF_c)
      \end{aligned}
    \end{equation*}
    and 
    \begin{math}
      \begin{aligned}[t]
        \tau_6&(\nabla \P F)_x =(d_C(i_{\Phi}\Omega)-i_{\Phi}dd_JF_c)_x
        =(d_Cd_{\Phi}d_JF-d_{\Phi}d_Jd_CF)_x
        \\
        &=(d_Cd_{\Phi}d_JF-d_{\Phi}d_Cd_JF-d_{\Phi}d_JF)_x
        =(d_{[C,\Phi]}d_JF)_x=(i_{\Phi}\Omega)_x=0.
      \end{aligned}
    \end{math}
  \item
  \begin{math}
    \begin{aligned}[t]
      \tau_7&(\nabla \P F)_x (X,Y) =X_x(i_{\Phi}dd_JF(Y,S))-X_x(\Phi Y(CF-F))
      \\
      &=X_x(\Phi Y d_JF(S)-d_JF([\Phi Y,S]))-X_x(\Phi Y(CF-F))
      \\
      &=-X_x(J[\Phi Y,S]F)+X_x(\Phi YF)=-X_x([J,S](\Phi Y)F)+X_x(\Phi YF)
      \\
      &=-X_x((2h-I) \Phi YF)-X_x(\Phi YF)=0.
    \end{aligned}
  \end{math}
\item Since $i_J\Omega(h_k,h_l)=0$ we have
  $\Omega(v_k,h_l)=\Omega(v_l,h_k)$. Moreover,
  \begin{displaymath}
    i_\Phi\Omega(h_i,h_k) =\Omega(\Phi h_i,h_k)-\Omega(\Phi h_k,h_i)=
    \lambda_i\Omega(v_i,h_k) -\lambda_k\Omega(v_k,h_i) =
    (\lambda_i-\lambda_k)\Omega(v_i,h_k),
  \end{displaymath}
  therefore, using the fact that $\lambda_i \! \neq \! \lambda_j$ and $F$ is a
  $2^{nd}$ order solution at $x$, from the last equation of \eqref{eq:P_3_x} we
  obtain $\Omega_x(v_i,h_k)=0$. Consequently,
  \begin{displaymath}
    \begin{aligned}[t]
      \nabla i_\Phi\Omega(h_j, h_i,h_k)_x =&
      \big(h_j(i_\Phi\Omega(h_i,h_k))\big)_x
      =\big((\lambda_i-\lambda_k)h_j\Omega(v_i,h_k)\big)_x
    \end{aligned}
  \end{displaymath}
  and
  \begin{displaymath}
    \begin{aligned}[t]
      \tau_{ijk}&(\nabla \P F)_x(h_i,h_j,h_k)=
      \\
      &=\tfrac{1}{2}\nabla i_\Gamma\Omega(v_i,h_j,h_k)_x+
      \frac{1}{\lambda_k-\lambda_i}\nabla i_\Phi\Omega(h_j, h_i,h_k)_x+
      \frac{1}{\lambda_i-\lambda_j}\nabla i_\Phi\Omega(h_k, h_i,h_j)_x
      \\
      &=v_i\Omega(h_j,h_k)_x+h_j\Omega(h_k,v_i)_x+h_k\Omega(v_i,h_j)_x.
    \end{aligned}
  \end{displaymath}
  From $d\Omega=0$ we have \
  \begin{math}
    \sum_{ijl}^{cycl} v_i\Omega(h_j,h_k) -\Omega([v_i,h_j],h_k)=0,
  \end{math}
  thus
  \begin{displaymath}
	\tau_{ijk}(\nabla \P F)_x 
    =\Omega_x([v_i,h_j],h_k) +\Omega_x([h_j,h_k],v_i)+\Omega_x([h_k,v_i],h_j).
  \end{displaymath}
\end{enumerate}
These calculations shows that the first compatibility condition
\eqref{eq:int_cond} for $\P$ is
\begin{displaymath}
  \varphi (F)_x=\tau (\nabla \P F)_x = 
  \big(i_R \Omega, \, 0, \, 0, \, \tfrac{1}{2}i_{[\Phi, \Phi]}\Omega,  
    \,     0, \, 0, \,  0, \,  \sum^{cycl}_{ijk} \Omega([v_i,h_j],h_k)
  \big)_x
\end{displaymath}
which  proves the proposition.

\hfill \rule{1ex}{1ex}

\subsection{Higher order compatibility condition}
\label{sec:3_2}

The integrability theorem of Cartan-K{\"a}hler or Spencer-Goldschmidt states
that if the first compatibility equations are satisfied and the symbol of the
linear differential operator is surjective (resp.~2-acyclic), then there is no
more (i.e. higher order) compatibility condition and the PDE operator is
formally integrable. Unfortunately this is not the case with the PDE operator
\eqref{eq:P}.  As it was proved in \cite[Theorem 5.1]{MiMu_2017}, the symbol
of $\P$ is not $2$-acyclic and therefore it is not involutive either. Since
the first non trivial Spencer cohomology group is $H^{2,2}$, we can deduce
that at the level of first prolongation of the PDE operator $\P$ there is an
extra (third order) compatibility condition appearing. In this subsection we
compute this extra condition and prove the following

\begin{proposition}
  \label{prop:3_order_integr_cond}
  If a 3\ts{rd} order solution $F$ of $\mathcal{P}$ at $x\in \mathcal{T}M$ can
  be lifted into a 4\ts{th} order solution, if and only if for any
  $X,Y\in H_xTM$ we have
  \begin{alignat}{1}
    \notag
    [&hX,\Phi X] \Omega(JY,Y)- [hY,\Phi Y]\Omega(X,JX) = \vphantom{\sum_{cycl}}
    \\
    \label{third_order_cond}
    & \ + \Phi X\Big(\sum_{cycl}\Omega([JY,hX],hY)\Big)
    - hX\Big(\sum_{cycl} \Omega([JY,\Phi X],hY) -\Omega([JY,\Phi Y],X)\Big)
    \\
    \notag & \  -\Phi Y\Big(\sum_{cycl}\Omega([JX,hX],hY)\Big)
    +hY\Big(\sum_{cycl}\Omega([JX,\Phi X],Y) -\Omega([JX,\Phi Y],X)\Big)
  \end{alignat}
\end{proposition}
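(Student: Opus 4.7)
The plan is to run the same procedure as in the proof of Proposition~\ref{prop:2_order_integr_cond} one prolongation higher: build a map $\tau^{(4)}$ whose kernel equals $\mathrm{Im}\,\sigma_4(\mathcal{P})$, then compute $\tau^{(4)}(\nabla^2\mathcal{P}F)_x$ for a third-order solution $F$ at $x$. Because by \cite[Theorem~5.1]{MiMu_2017} the symbol of $\mathcal{P}$ is not $2$-acyclic, with $H^{2,2}$ being the first non-vanishing Spencer cohomology group, there is precisely one new family of image relations at the fourth-jet level, beyond the prolongations of $\tau_1,\dots,\tau_7,\tau_{ijk}$, and \eqref{third_order_cond} records exactly this family.

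Concretely, I would first prolong the third-order symbol formulas once more to obtain $\sigma_4(P_C)$, $\sigma_4(P_\Gamma)$, $\sigma_4(P_\Phi)$, compute $g_4(\mathcal{P})=\ker\sigma_4(\mathcal{P})$ by counting free tensor components in the basis \eqref{eq:basis_1} as in the lemma of Subsection~\ref{sec:3_1}, and then identify the pivots of $\tau^{(4)}$ that are not simple prolongations of the old $\tau_i$. These new pivots correspond to entries of type $B_\Gamma(v_i,v_j,h_k,h_l)$ paired with appropriate $B_\Phi$-entries weighted by the eigenvalue denominators $1/(\lambda_a-\lambda_b)$, and a dimension count exactly parallel to the one in Subsection~\ref{sec:3_1} would certify $\ker\tau^{(4)}=\mathrm{Im}\,\sigma_4(\mathcal{P})$.

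Next, for a third-order solution $F$, the required quantity $\tau^{(4)}(\nabla^2\mathcal{P}F)_x$ would be evaluated using the Fr\"olicher--Nijenhuis calculus already employed in Subsection~\ref{sec:3_1}: the identities $[d,d_J]=0$, $[i_\Phi,d_J]=d_{J\Phi}-i_{[\Phi,J]}$, the homogeneity relations $[C,\Gamma]=0$ and $[C,\Phi]=\Phi$, the closedness $d\Omega=0$, together with $\Phi S=0$ and $i_J\Omega(h_k,h_l)=0$. The third-order-solution hypothesis provides the vanishing at $x$ of $P_CF$, $i_\Gamma\Omega$, $i_\Phi\Omega$ and their first horizontal and vertical derivatives, so the lower-order obstructions $i_R\Omega$ and $i_{[\Phi,\Phi]}\Omega$ of Proposition~\ref{prop:2_order_integr_cond} together with their derivatives may be discarded. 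The commutators $[hX,\Phi X]$ and $[hY,\Phi Y]$ on the left-hand side of \eqref{third_order_cond} emerge naturally from expanding a mixed horizontal second derivative of $i_\Phi\Omega$, while the cyclic sums on the right arise from using $d\Omega=0$ to trade vertical derivatives for $\Omega$-evaluations on brackets, in the same spirit as the computation of $\tau_{ijk}$ at the previous level.

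The main obstacle will be the algebraic identification of the correct component of $\tau^{(4)}$ representing the $H^{2,2}$ class: one needs a combination whose value on $\nabla^2\mathcal{P}F|_{\mathrm{Sol}_3}$ is independent of the fourth jet of $F$ and produces an intrinsic expression polynomial in the coefficients of $\Omega$ and their horizontal derivatives. The denominators $1/(\lambda_a-\lambda_b)$ must combine so that the final expression is regular on $\mathcal{T}M$, and the two intervening covariant derivatives make the bookkeeping significantly heavier than at third order; isolating the symmetric $\{X,Y\}$-dependence with the $h/\Phi$ structure of \eqref{third_order_cond} is where the delicate part of the argument lies.
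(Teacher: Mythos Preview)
Your strategy is the same as the paper's: prolong once, write down a $\tau^{(4)}$ with $\ker\tau^{(4)}=\mathrm{Im}\,\sigma_4(\mathcal{P})$, and evaluate $\tau^{(4)}(\nabla^2\mathcal{P}F)_x$ on a third-order solution. The paper indeed takes $\tau^{(4)}=(\mathrm{id}\otimes\tau,\tau_h)$ and computes $\tau_h(\nabla^2\mathcal{P}F)_x$, reducing second derivatives of $\Omega$ to first derivatives via $d\Omega=0$ and the vanishing of $\Omega$ on vertical pairs, exactly as you anticipate.

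Where your proposal diverges from the paper is in the form you expect for the new component $\tau_h$. You look for pivots of type $B_\Gamma(v_i,v_j,h_k,h_l)$ combined with $B_\Phi$-entries weighted by $1/(\lambda_a-\lambda_b)$, and then worry about these denominators cancelling to give a regular expression. The paper sidesteps this entirely: its $\tau_h$ is written intrinsically as
\[
\tau_h(B)(X,Y)=\tfrac{1}{2}\big(B_\Gamma(\Phi X,JY,X,Y)-B_\Gamma(\Phi Y,JX,X,Y)\big)+B_\Phi(hY,JX,X,Y)-B_\Phi(hX,JY,X,Y),
\]
using $\Phi$ and $h$ directly rather than the eigenbasis. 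In the adapted basis this gives $\tfrac{1}{2}(\lambda_i-\lambda_j)B_\Gamma(v_i,v_j,h_i,h_j)$ plus $B_\Phi$-terms, so the eigenvalue differences appear as \emph{factors}, not denominators, and your regularity concern evaporates. The count of genuinely new equations is read off from $\dim H^{2,2}=\tbinom{n-1}{2}$ (not re-derived from scratch), and exactness of the prolonged sequence is only asserted, not re-proved by a full pivot count.

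So the gap is not in the method but in the ansatz for $\tau_h$: once you replace your eigenvalue-denominator template by the $\Phi$/$h$ combination above, the computation of $\tau_h(\nabla^2\mathcal{P}F)_x$ is a direct unwinding---$\Phi X(JY\,i_\Gamma\Omega(X,Y))$, $hX(JY\,i_\Phi\Omega(X,Y))$ and their $(X\leftrightarrow Y)$ counterparts---and the commutators $[hX,\Phi X]$, $[hY,\Phi Y]$ together with the cyclic bracket terms fall out after one application of $d\Omega=0$.
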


\begin{proof}
  If the Spencer cohomology groups $H^{2,2}$ would be trivial, then the
  compatibility condition of the first prolongated system would be exactly the
  prolongation of the first compatibility condition of $\P$. However, from
  Theorem 5.1 in \cite{MiMu_2017} we know that
  \begin{math} 
    \dim H^{2,2} =\frac{1}{2}(n\!-\!1)(n\!-\!2)=\C{n,2}.
  \end{math}
  That indicates that there is $\C{n,2}$ extra compatibility condition for the
  first prolongation of $\P$. To compute these conditions we will complete the
  prolongation of the map $\tau$ defined in \eqref{eq:tau} and apply the
  method described in Remark \ref{sec:remark_surjectivity}.

  \smallskip

  \noindent
  Let us consider the sequence
  \begin{equation}
    \label{egzaktseq}
    S^{4}T^{*}\xrightarrow{ \ \sigma_4(\P) \ } 
    (S^{3}T^{*}) \! \times \! (S^{2}T^{*}\o \Lambda^{2}T^{*})\!\times\!   
    (S^{2}T^{*}\o \Lambda^{2}T^{*})\xrightarrow{ \ \  \tau^1  \ \ }
    \  K^1 \ \longrightarrow \  0
  \end{equation}
  where 
  \begin{math}
    \sigma_4(\P) \!= \!id \otimes \sigma_3(\P)
  \end{math}
  is the prolongation of the symbol of $\P$, and the components of
  $\tau^1=(id \o \tau, \tau_h)$ are the prolongation $id \otimes \tau$ of
  \eqref{eq:tau} and $\tau_h$ with
  \begin{align}
    \label{highero}
    \begin{split}
      \tau_{h} (B)(X,Y):=&\frac{1}{2}\left(B_\Gamma (\Phi X, JY, X,Y)-B_\Gamma
        (\Phi Y, JX, X,Y)\right)
      \\
      &+ B_\Phi (hY, JX, X,Y) - B_\Phi (hX, JY, X,Y),
    \end{split}
  \end{align}
  where
  \begin{math}
    B:=(B_C, B_{\Gamma}, B_{\Phi})\in S^{3}T^* \t (S^{2}T^{*}\o
    \Lambda^{2}T^{*}_{v}) \t (S^{2}T^{*}\o \Lambda^{2}T^{*}_{v}),
  \end{math}
  $X,Y\in T$.  It is not difficult to show that the equation $\tau_h=0$ gives
  $\C{n,2}$ extra independent equations with respect to the equations of the
  system $\tau^1=0$, thus the sequence \eqref{highero} is exact.  \bigskip

  \noindent
  Let us compute the compatibility conditions appearing from the new
  obstruction map $\tau_h$. If $F$ is a $3\ts{rd}$ order solution of
  $\mathcal{P}$ at $x\in \mathcal{T}M$, then
  \begin{alignat*}{1}
    & \tau_{h}(\nabla \nabla \mathcal{P}F)_x(X,Y) =
    \\
    & =\frac{1}{2}\left(\!\nabla^{2} i_\Gamma\Omega(\Phi X, \! JY,\!
      X,\!Y)\!-\!\nabla^{2} i_\Gamma \Omega(\Phi Y, \!JX, \!X,\!Y)\right)
    \! + \! \nabla^{2} i_\Phi \Omega(hY,\! JX,\! X,\!Y) \!-\! \nabla^{2}
    i_\Phi \Omega (hX, \!JY, \!X,\!Y)
    \\
    & = \Phi X(JY i_\Gamma \Omega(X,Y)) \!-\!\Phi Y(JXi_\Gamma\Omega(X,Y))
    \frac{}{}
    \!+\! hY(JX i_\Phi \Omega(X,Y)) \!- \!hX(JY i_\Phi \Omega(X,Y)).
  \end{alignat*}
  In the above formula the $2\ts{nd}$ derivatives of $\Omega$ (and therefore
  4\ts{th} derivatives of $F$) appears. However, using the fact that
  $\Omega=dd_JF$ vanishes identically on the vertical sub-space,
  $\Omega(JX,hY)=\Omega(JY,hX)$ and $\Omega(Y,JX)=\Omega(JY,X)=0$, the
  $2\ts{nd}$ derivatives of $\Omega$ can be expressed by $1\ts{st}$
  derivatives of $\Omega$ and find
  \begin{alignat*}{1}
    \tau_{h}(\nabla \nabla & \mathcal{P}F)_x(X,Y) = [hY,\Phi Y]\Omega(X,JX) -
    [hX,\Phi X] \Omega(JY,Y) \vphantom{\sum_{cycl}}
    \\
    & \ + \Phi X\Big(\sum_{cycl}\Omega([JY,hX],hY)\Big)
    - hX\Big(\sum_{cycl} \Omega([JY,\Phi X],hY) -\Omega([JY,\Phi Y],X)\Big)
    \\
    \notag & \  -\Phi Y\Big(\sum_{cycl}\Omega([JX,hX],hY)\Big)
    +hY\Big(\sum_{cycl}\Omega([JX,\Phi X],Y) -\Omega([JX,\Phi Y],X)\Big),
  \end{alignat*}
  containing only 3\ts{rd} order derivatives of $F$.  The compatibility
  condition is satisfied if and only if
  $\tau_{h}(\nabla \nabla \mathcal{P}F)_x=0$ which is equivalent to equation
  (\ref{third_order_cond}).
\end{proof}
\begin{remark}
  \label{rem:Omega_adapted_P_3}
  In an adapted basis \eqref{eq:basis_1} we have
 \begin{subequations}
  \label{eq:P_3_basis}
   \begin{align}
    \label{eq:S_basis}
     i_\Gamma\Omega&=0 \quad \Longleftrightarrow \quad  \Omega(h_i,h_j)=0, & &  1 \leq i,j \leq n,
     \\
    \label{eq:Gamma_basis}
     i_\Phi\Omega&=0 \quad \Longleftrightarrow  \quad \Omega(v_i,h_j) =0, & & 1 \leq i,j \leq n, \quad i \neq j,
     \\
    \label{eq:Phi_basis}
     i_S\Omega&=0 \quad \Longleftrightarrow \quad  \Omega(v_i,h_n) =
                \Omega(h_i,h_n) =0, & &  1 \leq i \leq n,
   \end{align}
 \end{subequations}
 hence, using the notation $a_{ij}=\Omega (v_i, h_j)$ the only nonzero
 components of $\Omega$ are
  \begin{equation}
    \label{eq:g_ii}
    a_{ii}=\Omega (v_i, h_i), \qquad i = 1, \dots , n\!-\! 1.   
  \end{equation}
  Moreover, because the Hessian of $F$ must be positive quasi-definite, the
  terms in \eqref{eq:g_ii} are positives.
\end{remark}

\begin{corollary}
  \label{adaptedbase_corollary}
  In an adapted basis \eqref{eq:basis_1} the compatibility condition 
  \eqref{third_order_cond} can be expressed as
  \begin{equation}
    \label{eq:phi_loc}
    \beta^i_{ij} (\mathcal{L}_{v_i}a_{ii}) \!+\! \beta^j_{ij}
    (\mathcal{L}_{v_j}a_{jj}) 
    \!+\! \gamma^i_{ij} (\mathcal{L}_{h_i}a_{ii})
    \!+\! \gamma^j_{ij} (\mathcal{L}_{h_j}a_{jj})
    \!+\! \sum_{k=1}^n \! \alpha^k_{ij}a_{kk} =0, 
  \end{equation}
  where $1 \!\leq \! i,j \!\leq \! n$, $a_{ij}\!=\!\Omega(v_i,h_j)$ and the
  summation convention is not applied.  The $\alpha^k_{ij}$, $\beta^k_{ij}$,
  $\gamma^k_{ij}$ are functions in a neighborhood of $x\in \mathcal{T}M$
  determined by the Lie bracket of the elements of the local basis
  \eqref{eq:basis_1}.
\end{corollary}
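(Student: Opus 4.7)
The plan is to verify the statement by direct substitution into the compatibility condition and expansion in the adapted basis. Setting $X\=h_i$ and $Y\=h_j$ in \eqref{third_order_cond}, I use that each $h_k$ is an eigenvector of $\Phi$ with eigenvalue $\lambda_k$, so $\Phi X\=\lambda_i v_i$ and $\Phi Y\=\lambda_j v_j$. The outer Lie brackets then expand by the Leibniz rule, e.g.\ $[h_i,\Phi h_i]\=(h_i\lambda_i)\,v_i+\lambda_i[h_i,v_i]$, turning the left-hand side of \eqref{third_order_cond} into a sum of directional derivatives of $a_{jj}$ and $a_{ii}$ along combinations of basis vectors, modulated by the eigenvalues $\lambda_i,\lambda_j$ and their horizontal derivatives.

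Next, Remark \ref{rem:Omega_adapted_P_3} collapses every factor $\Omega(\cdot,\cdot)$ on the right-hand side of \eqref{third_order_cond} to $\pm a_{kk}$ or to zero. Expanding each Lie bracket $[v_k,h_l]$ and $[h_k,h_l]$ in the basis $\mathcal B$ introduces the structure functions of $\mathcal B$, and only the components contracting with a surviving pairing $(v_p,h_p)$ contribute. The inner cyclic sums $\sum_{cycl}\Omega([\,,\,],\,)$ thereby reduce to linear combinations of the $a_{kk}$'s whose coefficients depend only on the Lie brackets of $\mathcal B$. Applying the outer derivations $\Phi X,\,hX,\,\Phi Y,\,hY$ to these expressions by the Leibniz rule produces two kinds of contributions: algebraic terms (derivatives of structure functions times $a_{kk}$), which collect to form the $\sum_{k=1}^n\alpha^k_{ij}a_{kk}$ part of \eqref{eq:phi_loc}, and first-order terms, which are Lie derivatives of $a_{kk}$'s along basis vectors.

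The main obstacle is that the expansion a priori produces unwanted first-order contributions of the form $\mathcal L_{v_l}a_{kk}$ and $\mathcal L_{h_l}a_{kk}$ with an index pair $(k,l)$ outside the admissible set $\{(i,i),(j,j)\}$. To eliminate them I invoke the closedness $d\Omega\=0$, which holds identically since $\Omega\=dd_JF$, evaluated on appropriately chosen triples of basis vectors. Combined with the on-shell vanishing $\Omega_x(v_k,h_l)\=0$ for $k\!\neq\!l$ and $\Omega_x(h_k,h_l)\=0$, valid because $F$ is a third order solution of $\P$, each such $d\Omega\=0$ identity expresses an unwanted derivative as a linear combination of the four allowed derivatives $\mathcal L_{v_i}a_{ii}$, $\mathcal L_{v_j}a_{jj}$, $\mathcal L_{h_i}a_{ii}$, $\mathcal L_{h_j}a_{jj}$ together with algebraic terms in the $a_{kk}$'s. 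Substituting back and collecting contributions yields \eqref{eq:phi_loc}; the coefficients $\alpha^k_{ij},\beta^k_{ij},\gamma^k_{ij}$ are then given as universal expressions in the eigenvalues $\lambda_k$, their derivatives along $\mathcal B$, and the structure functions of $\mathcal B$, depending neither on $F$ nor on any other datum beyond the adapted basis itself.
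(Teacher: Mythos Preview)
Your proposal is correct and follows essentially the same route as the paper: substitute $X=h_i$, $Y=h_j$, use the eigenvector property $\Phi h_k=\lambda_k v_k$, expand all Lie brackets in the adapted basis via the structure functions, and then invoke $d\Omega=0$ together with the on-shell vanishing \eqref{eq:P_3_basis} to eliminate the mixed derivatives. One sharpening worth noting: the paper shows that each ``unwanted'' derivative $\mathcal L_{v_k}a_{ll}$ or $\mathcal L_{h_k}a_{ll}$ with $k\neq l$ reduces \emph{purely algebraically} to a combination of $a_{kk}$ and $a_{ll}$ (no residual derivative terms), which is slightly stronger than what you state and is precisely what forces the final expression to involve only the four derivatives $\mathcal L_{v_i}a_{ii},\,\mathcal L_{v_j}a_{jj},\,\mathcal L_{h_i}a_{ii},\,\mathcal L_{h_j}a_{jj}$.
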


If $\{\xi^i, \nu^i\}_{1 \leq i\leq n}$ is the dual basis of
\eqref{eq:basis_1}, then $X=\xi^i_Xh_i+\nu^i_Xv_i$ and using $d\Omega=0$ and
from \eqref{eq:P_3_basis} we get
\begin{alignat*}{2}
  \mathcal{L}_{v_i}a_{jj}& =\Omega([v_i,v_j],h_j)\+\Omega([v_j,h_j],v_i)
  \+\Omega([h_j,v_i],v_j)
  && \= \xi^i_{[h_j,v_j]}a_{ii} \+ \big(\nu^j_{[v_i,v_j]}\+
  \xi^j_{[v_i,h_j]}\big)a_{jj},
  \\
  \mathcal{L}_{h_i}a_{jj} & =\Omega([h_i,h_j],v_j)\+\Omega([h_j,v_j],h_i)
  \+\Omega([v_j,h_i],h_j)
  && \=\nu^i_{[h_j,v_j]} a_{ii} \+ \big(\xi^j_{[h_j,h_i]} \+ \nu^j_{[v_j,h_i]}
  \big) a_{jj} ,
\end{alignat*}
where $i,j \in\left\{1,...,n\right\}$ and the summation convention is not
applied. Then \eqref{third_order_cond} can be expressed as
\begin{equation}
  \label{coeff_1}
  \kappa^i_{ij}  a_{ii}+\kappa^j_{ij}a_{jj}+\sum_{k=1}^n\theta^k_{ij}a_{kk}
  +(\lambda_j\!-\!\lambda_i)\big(\mathcal L_{[h_j,v_j]}a_{ii}- 
  \mathcal L_{[h_i,v_i]}a_{jj}\big)=0,
\end{equation}
\noindent
where the functions $\kappa^i_{ij}$ and $\theta^k_{ij}$ are
\begin{subequations}
  \begin{align}
    \label{coeff_2_1}
    \kappa^i_{ij}=& \lambda_j\Big( \mathcal{L}_{v_j}\xi^i_{[h_i,h_j]} \!-\! \,
    \mathcal{L}_{v_j} \nu^i_{[h_j,v_i]} \!-\!  \mathcal{L}_{h_j}
    \nu^i_{[v_i,v_j]} \!+ \!\mathcal{L}_{h_j}\xi^i_{[v_j,h_i]} \Big) +
    \lambda_i\Big( \nu^i_{[v_i,[h_j,v_j]]}
    \\
    & \notag \quad \!-\! \xi^i_{[[h_j,v_j],h_i]}
    \!-\!\nu^j_{[v_j,h_i]}\xi^i_{[v_j,h_j]}
    \!-\!\xi^j_{[h_i,h_j]}\xi^i_{[h_j,v_j]}
    \!-\!\nu^j_{[v_j,v_i]}\nu^i_{[v_j,h_j]}
    \!-\!\xi^j_{[v_i,h_j]}\nu^i_{[h_j,v_j]}\Big)
    \\
    \label{coeff_2_2}
	\theta^k_{ij}=&(\lambda_i-\lambda_j)\Big(\xi^k_{[h_j,v_j]}\nu^k_{[h_i,v_i]}
    -\xi^k_{[h_i,v_i]}\nu^k_{[h_j,v_j]}\Big),
  \end{align}
\end{subequations}
$1 \leq i,j,k \leq n$.  In particular, the coefficients of the third order
terms in \eqref{coeff_1} appearing in the Lie derivative terms are:
\begin{equation}
  \label{eq:3rd_order_coeff}
  \beta_{ij}^i=\lambda_j\nu_{[v_j,h_j]}^i, \qquad \qquad 
  \gamma_{ij}^i=\lambda_j\xi_{[v_j,h_j]}^i.
\end{equation}

\begin{definition}
  We say that the higher order compatibility condition of the spray $S$ is
  reducible, if the coefficients $\beta^i_{ij}$, $\gamma^i_{ij}$ are
  identically zero.
\end{definition}

\begin{remark}
  In the reducible case the third order condition can be identically zero or
  may represent a second order compatibility condition.  From
  \eqref{eq:3rd_order_coeff} it is clear that when the spray has distinct
  Jacobi eigenvalues, then the higher order compatibility condition of the
  spray $S$ is reducible if and only if the eigen-distributions
  $\mathcal{D}_i:=Span\{h_i,v_i\}$, $1 \leq i \leq n $ are involutives.
\end{remark}

\section{The three dimensional case}
\label{sec:4}

In the previous chapter we investigated the integrability condition of the
extended Rapcs\'ak system completed with the curvature condition. We
determined the compatibility condition to lift a second order solution into a
third order solution (Proposition \ref{prop:2_order_integr_cond}) and an extra
higher order compatibility condition appearing to lift a second order solution
into a third order solution (Proposition \ref{prop:3_order_integr_cond}).  In
this chapter we focus on the case, when $M$ is a $3-$dimensional manifold and
the spray is non-isotropic. We consider the generic situation, when the
eigenvalues of the Jacobi endomorphism are pairwise distinct.  We identify the
special cases when the integrability conditions are satisfied and by computing
the higher order Spencer cohomology groups we prove that the system has no
further compatibility condition.

\subsection{Extended Rapcs\'ak system with curvature condition}

Let $S$ be a spray on the 3-manifold $M$ and let us consider the second order
PDE operator $\P$ given by \eqref{eq:P} where the eigenvalues of the Jacobi
tensor $\Phi$ are pairwise different.

\subsubsection*{The first compatibility conditions}

The compatibility condition to lift a second order solution of $\P$ into a
third order solution is given by \eqref{iphiphi} and \eqref{omegavihjhk}.
Then, in the $\dim M=3$ case, we can have the following
\begin{remark}
  \label{thm:comp_1}
  The compatibility condition \eqref{iphiphi} is identically satisfied.
\end{remark}
\noindent
Indeed, $\Phi$ is semi basic 1-1 tensor and from $i_S\Phi =0$ we have
$i_S[\Phi, \Phi]=0$.  Evaluating the semi basic 3-form
$i_{[\Phi, \Phi]}\Omega$ on the 3-dimensional horizontal space by using the
second equation of \eqref{eq:rap_2} we get that
\begin{alignat*}{1}
  i_{[\Phi, \Phi]}\Omega (h_1,h_2,h_3)& = \sum_{123}^{cycl}\Omega\big([\Phi,
  \Phi] (h_1,h_2),h_3\big)= i_S \Omega\big([\Phi, \Phi] (h_1,h_2)\big) =0.
\end{alignat*}

\begin{remark}
  \label{thm:comp_2}
  Introducing $\Phi':=v\!\circ\! [S,\Phi]\!\circ\! h$, the semi basic
  dynamical covariant derivative of $\Phi$ (see \cite{BuMu_2012,
    GrMu_2000}), the integrability condition \eqref{omegavihjhk} can be
  written as
  \begin{equation}
    \label{eq:i_Phi_vesszo}
    i_{\Phi'}\Omega=0.
  \end{equation}
  Indeed, we have
  \begin{math}
    \Phi'(S)=v [S,\Phi]S = v [S,\Phi S] - \Phi ([S,S])=0,
  \end{math}
  thus \eqref{eq:i_Phi_vesszo} is satisfied if and only if
  $i_{\Phi'}\Omega(h_1,h_2)=0$.  Moreover, $F$ being a second order solution,
  we have \eqref{eq:P_3_x}, and in particular $\Omega([v_1,h_2],S)\!=\!0$ and
  $\Omega(h_1,h_2)\!=\!0$ at $x$.  Using $h_i=[J,S]h_i=[v_i,S]-J[h_i,S]$,
  $i=1,2$ and $h_3\!=\!S$ we can obtain that the condition (\ref{omegavihjhk})
  is
  \begin{equation}
    \label{eq:obstr_tau_ijk_dim_3}
    \begin{aligned}
      (\ref{omegavihjhk})
      &=\Omega([v_1,h_2],S)+\Omega([h_2,S],v_1)+\Omega([S,v_1],h_2) 
      \\
      &=\Omega([h_2,S],v_1) -\Omega(J[h_1,S],h_2)
      =\Omega([h_2,S],v_1)-\Omega(v_2,[h_1,S]).
    \end{aligned}
  \end{equation}
  On the other hand, from to $i_\Phi\Omega=0$ we have
  \begin{align}
    \label{of:123}
	\Omega(\Phi[h_i,S],h_j)=\Omega(\Phi h_j,[h_i,S]) 
    =\lambda_j  \Omega(v_j,[h_i,S]), \quad i,j=1,2, \ i\neq j,
  \end{align}
  therefore,
  \begin{alignat*}{1}
    i_{\Phi'}\Omega(h_1,h_2)&=\Omega(\Phi'(h_1),h_2) -\Omega(\Phi'(h_2),h_1)
    \\
    &=\Omega\big([\Phi h_1,S]-\Phi[h_1,S],h_2\big)-\Omega\big([\Phi
    h_2,S]-\Phi[h_2,S],h_1\big) \stackrel{(\ref{of:123})}{=}
    %
    \\ & = \lambda_1\Omega(J[h_1,S],h_2)-\lambda_2\Omega(v_2,[h_1,S])
    -\lambda_2\Omega(J[h_2,S],h_1)+\lambda_1\Omega(v_1,[h_2,S])
    \\
    &=\lambda_1\Omega(v_2,[h_1,S])-\lambda_2\Omega(v_2,[h_1,S])
    -\lambda_2\Omega(v_1,[h_2,S])+\lambda_1\Omega(v_1,[h_2,S])
    \\
    &=(\lambda_1-\lambda_2)(\Omega(v_2,[h_1,S])+\Omega(v_1,[h_2,S])).
  \end{alignat*}
  Comparing the result with (\ref{eq:obstr_tau_ijk_dim_3}) we obtain Remark
  \ref{thm:comp_2}.
\end{remark}


\subsubsection*{Higher order compatibility conditions}

From \cite[Chapter 5]{MiMu_2017} we know that the Spencer cohomology group
$H^{2,2}$ is nonzero and there are extra compatibility conditions arising for
the first prolongation of $\P$. This compatibility condition was calculated in
Proposition \ref{prop:3_order_integr_cond}. Now we compute all the higher
order Spencer cohomology groups $H^{m,2}$ for $m\geq 3$. We have
\begin{lemma}
  \label{t:2ac}
  For any $m\geq 3$ the Spencer cohomology group $H^{m,2}$ is trivial.
\end{lemma}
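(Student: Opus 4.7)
The plan is to establish involutivity of the prolonged symbol $\sigma_3(\mathcal{P})$ via Cartan's test and then invoke the classical principle that involutivity is preserved under prolongation and implies the vanishing of all Spencer cohomology groups $H^{m,k}$, $k\geq 1$, at the corresponding order.

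First I would describe $g_m=\Ker\sigma_m(\mathcal{P})$ explicitly. The order-$m$ analogues of \eqref{eq:symb_1}--\eqref{eq:symb_3}, applied in the adapted basis \eqref{eq:basis_1}, translate into three pointwise conditions on a symmetric tensor $A\in S^{m}T^{*}$: (i) $A$ vanishes as soon as any argument equals $v_3=C$; (ii) any pair of mixed indices $(h_i,v_j)$ is symmetric under $i\!\leftrightarrow\! j$; (iii) two vertical indices $v_i,v_j$ with $i\neq j$ never coexist in a nonzero component, so the vertical entries must all coincide. In the case $n=3$ these requirements admit a simple parametrisation of $g_m$: the components are indexed by horizontal multi-indices together with a small number of mixed families, organised by the number of vertical entries. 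This yields $\dim g_m$ as an explicit polynomial in $m$.

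Next I would check Cartan's test for $\sigma_3$. Choosing the ordering $(e_1,\ldots,e_6)=(h_1,h_2,v_1,v_2,h_3,v_3)$, which leaves the distinguished directions $h_3=S$ and $v_3=C$ last, I would compute $\dim(g_3)_{e_1,\ldots,e_j}$ for $j=1,\ldots,6$ by successively imposing the annihilation conditions via (i)--(iii), and verify the equality
$$\dim g_4=\dim g_3+\sum_{j=1}^{6}\dim(g_3)_{e_1,\ldots,e_j}.$$
This would establish that $(e_1,\ldots,e_6)$ is quasi-regular for $\sigma_3$, so $\sigma_3$ is involutive. Since involutivity is preserved under prolongation and implies the vanishing of all positive-weight Spencer cohomology groups at and above that level, we conclude $H^{m,2}=0$ for every $m\geq 3$.

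The main difficulty will be the combinatorial bookkeeping in the second step: conditions (i)--(iii) interact non-trivially, and whether the Cartan test actually holds depends sensitively on the chosen ordering of the basis vectors. The special role of $v_3=C$ (killed by $P_C$) and of $h_3=S$ (attached to the Jacobi eigenvalue $\lambda_3=0$) singles them out, which is the reason for placing them last; this ordering is what makes the successive annihilated subspaces $(g_3)_{e_1,\ldots,e_j}$ cut out cleanly by symmetric-tensor conditions and reduces the final count to elementary identities in binomial coefficients.
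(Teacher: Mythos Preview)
Your plan has a genuine gap: the Cartan test \emph{fails} for the ordering $(h_1,h_2,v_1,v_2,h_3,v_3)$ you propose. In the three--dimensional case one has $\dim g_m=\tfrac{m^2+9m}{2}$, so $\dim g_3=18$ and $\dim g_4=26$. With your ordering a direct count of the free components of $g_3$ that survive each successive annihilation gives
\[
\dim(g_3)_{e_1,\dots,e_j}=8,\;3,\;2,\;1,\;0,\;0 \qquad (j=1,\dots,6),
\]
so $\dim g_3+\sum_{j=1}^6\dim(g_3)_{e_1,\dots,e_j}=18+14=32\neq 26$. (For instance, after imposing $i_{h_1}A=i_{h_2}A=0$ the three components $A(h_3,h_3,h_3)$, $A(v_1,v_1,v_1)$, $A(v_2,v_2,v_2)$ all survive, because their supports are the disjoint singletons $\{h_3\},\{v_1\},\{v_2\}$; this already forces the tail of the sum to be at least $3+2+1$.) Hence your chosen basis is not quasi--regular, and the argument as written does not go through. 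Whether \emph{some} basis of $T_xTM$ is quasi--regular for $g_3$ is a separate question; the candidate Cartan characters compatible with the known values $\dim g_3,\dim g_4,\dim g_5,\dots$ would have to be $(11,6,1,0,0,0)$, and verifying that these are actually realised by a generic basis is nontrivial and not addressed by your outline. In any case, the adapted eigenbasis does not do the job.

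The paper does \emph{not} attempt to prove involutivity of $\sigma_3$; it proves only the weaker 2--acyclicity statement $H^{m,2}=0$ for $m\ge 3$, and it does so by a direct dimension count. Concretely, it computes $\operatorname{rank}\delta^m_1=\dim(T^*\otimes g_{m+1})-\dim g_{m+2}$ from the explicit formula for $\dim g_m$, and then determines $\dim\operatorname{Ker}\delta^m_2$ by an exhaustive (and rather lengthy) enumeration of the independent equations among $\mathcal E_{ijk}(\dots)=0$ in $\Lambda^2T^*\otimes g_m$, organised into ten families of tensor components. Both numbers come out to $\tfrac{5m^2+53m+38}{2}$, whence $\operatorname{Im}\delta^m_1=\operatorname{Ker}\delta^m_2$ and $H^{m,2}=0$. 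This bypasses the Cartan test entirely and is precisely what is needed for the Goldschmidt version of Cartan--K\"ahler.
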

Let us consider the following Spencer-sequence
\begin{equation}
  \label{spencerseq}
  0 \rightarrow  g_{m+2} \xrightarrow{ \ i \ }T^{*} \otimes 
  g_{m+1} \xrightarrow{ \ \delta^{m}_{1} \ } 
  \Lambda^{2}T^{*} \otimes g_{m} \xrightarrow{ \ \delta^{m}_{2}
    \ }\Lambda^{3}T^{*} \otimes g_{m-1} \longrightarrow \dots
\end{equation}
where $i$ is the inclusion, $\delta^{m}_{1}$ and $\delta^{m}_{2}$ are the
Spencer operators skew-symmetrizing the first two, resp.~three variables.  To
prove the lemma we have to show that
$H^{m,2}=\mathrm{Ker} \,\delta^{m}_{2}/\mathrm{Im} \,\delta^{m}_{1}=0$, that
is $\mathrm{Ker} \,\delta^{m}_{2}=\mathrm{Im} \,\delta^{m}_{1}$.  Since one
has $\mathrm{Im} \,\delta^{m}_{1}\subset \mathrm{Ker} \,\delta^{m}_{2}$, it is
enough to show that the dimension of the two spaces are equal.

\medskip

\textsl{Step 1: computation of $\mathrm{rank} \,\delta^{m}_{1}$.}  Since the
sequence (\ref{spencerseq}) is always exact in the first two term we have
\begin{equation}
  \label{eq:rank_d_1_m}
  \mathrm{rank}\, \delta^{m}_{1}=\dim \, (T^{*} \! \otimes g_{m+1})-\dim \, (g_{m+2}).
\end{equation}
Let us find the general formula for the dimension of $g_{m}$ by determining
how many free tensor components can be chosen to determine its elements.  The
computation is similar to that of $\dim g_{3}$ on page
\pageref{eq:symb_1}. The equations characterizing an element of
$A \in g_{m}(\mathcal{P})$ in $S^{m}T^{*}$ are the prolongations of the
equations \eqref{eq:symb_1}--\eqref{eq:symb_3}:
 \begin{subequations}
  \begin{align}
  \label{eq:PC}
  A(\dots,v_3)&=0,
  \\
  \label{eq:PGamma}
  A(\dots,h_k,v_l)-A(\dots,h_l, v_k)&=0, \quad k,l=1,2,3, \quad k \neq l,
  \\
  \label{eq:PPhi}
    A(\dots,v_k,v_l)&=0, \quad k,l=1,2,3, \quad k \neq l.
  \end{align}
\end{subequations}
Hence the tensor components of the horizontal block
\begin{math}
  \big(A(h_{i_1} \dots  h_{i_m})\big)
\end{math}
are totally symmetric and contains $\Cr{3,m}$ free components. The
\begin{math}
  \big(A_{h_{i_1}}\dots h_{i_{m-1}}, v_{i_m}\big)
\end{math}
block is totally symmetric and if one of the indices is $3$, then that tensor
component is zero. Therefore there are $\Cr{2,m}$ free component in such a
block. In the blocks containing at least two vertical vectors only the
$A(h^k_i,v^{m-k}_i)$, ($i=1,2$, $k=0,\dots, m-2$) are free, where we use the
upper index to denote the multiplicity of the vector. Summing the number of
free components we find
\begin{align*}
  \dim g_{m}=&\Cr{3,m}+\Cr{2,m}+(m-1) \, \Cr{1,m},
\end{align*}
and from \eqref{eq:rank_d_1_m} we get
\begin{equation}
  \mathrm{rank}\, \delta^{m}_{1} =  2n \cdot \dim g_{m+1} - \dim g_{m+2}
  =\frac{5m^{2}\+53m\+38}{2}.
\end{equation}

\medskip

\textsl{Step 2: computation of $\mathrm{Ker} \,\delta^{m}_{2}$.}  We will
compute how many independent tensor components characterize an element $B$ of
$\mathrm{Ker} \, \delta^{m}_{2}$ in $\Lambda^{2}T^{*} \o S^{m}T^{*}$. We can
remark that there is no restriction to the purely horizontal blocks in $g_m(\P)$,
therefore we can use the exact sequence
\begin{displaymath}
  0\longrightarrow S^{m+2}T^{*}_{v}\xrightarrow{ \ i \ }T^{*}_{v} 
  \o S^{m+1}T^{*}_{v}  \xrightarrow{ \ \delta^{m}_{1,v} \ } 
  \Lambda^{2}T^{*}_{v} \otimes S^{m}T^{*}_{v} \xrightarrow{ \ \delta^{m}_{2,v}
    \ } \Lambda^{3}T^{*}_{v} \otimes S^{m-1}T^{*}_{v}\longrightarrow \dots ,
\end{displaymath}
where $\delta^{m}_{1,v}$ and $\delta^{m}_{2,v}$ are the restriction of
$\delta^{m}_{1}$ and $\delta^{m}_2$ on the corresponding subspaces. We get
\begin{align*}
  \nul \delta^{m}_{2,v}=\rank \delta^{m}_{1,v}
  =\dim(T^{*}_{v} \otimes S^{m+1}T^{*}_{v})-\dim(S^{m+2}T^{*}_{v})
  =\C{3,2} \,\Cr{3,m+1}-\Cr{3,m+2},
\end{align*}
and the number $N_0 =\rank \delta^{m}_{2,v}$ of the independent equations
characterizing the horizontal components of $B$ in
$\Lambda^{2}T^{*} \o S^{m}T^{*}$ is
\begin{equation}
  N_0\=\dim \left(\Lambda^{2}T^{*}_{v}
    \o S^{m}T^{*}_{v}\right)\!-\! \nul \delta^{m}_{2,v}
  =\C{3,2}\,\Cr{3,m}\!-\!(\C{3,2}\,\Cr{3,m+1}\!-\!\Cr{3,m+2})=\frac{m^2+m}{2}.
\end{equation}
In the sequel we characterize the number of independent equations on the
tensor components of $B$ having at least one vertical vector in its argument.
For simplicity we also use the notation for the element of the basis
\eqref{eq:basis_1}
\begin{equation}
  \label{eq:basis_2} 
  \left\{e_1,...,e_6\right\}=\left\{h_1,h_2,h_3,v_1,v_2,v_3\right\}.
\end{equation}
In particular, we have $S\=h_3\=e_e$ and $C\=v_3\=e_6$. Depending on which is
more advantageous to express the computation, we use both the notation
\eqref{eq:basis_1} and \eqref{eq:basis_2} for the elements of the basis.  An
element $B\in \Lambda^{2}T^{*}\otimes g_{m}$ is skew-symmetric in the first
two variables, symmetric in the last $m$ variables and from \eqref{eq:PC} --
\eqref{eq:PPhi} we get for any $1 \leq i,j\leq 6$
\begin{subequations}
  \label{eq:1}
  \begin{align}
  \label{eq:e_6}
  B(e_i,e_j,\dots \dots , e_6)&=0,
  \\
  \label{eq:314}
  B(e_i,e_j,\dots,e_1,e_5)-B(e_i,e_j,\dots,e_2,e_4)&=0,
  \\
    \label{eq:343545}
    B(e_i,e_j,\dots, e_3,e_4)=0, \quad
    B(e_i,e_j,\dots,e_3,e_5)=0, \quad
    B(e_i,e_j,\dots,e_4,e_5)&=0.  
  \end{align}
\end{subequations}
In order to simplify even further the computation we use the earlier
introduced upper index notation for the multiplicity in the tensor components
and we set
\begin{displaymath}  
  \E_{ijk}(e_{l_1}^{r_1} \dots  \, e_{l_s}^{r_s})  :=
  \E(e_i,e_j,e_k,e_{l_1}^{r_1} \dots  \, e_{l_s}^{r_s})  
  = \sum^{cycl}_{ijk}B(e_i,e_j,e_k,  e_{l_1}^{r_1} \dots  \, e_{l_s}^{r_s}).
\end{displaymath}
The indexes $1 \leq i,j,k \leq 6$ in $\E_{ijk}$ make always reference to the
indexes with respect to the basis \eqref{eq:basis_2}.  $\E$ is skew-symmetric
in the first three variables and symmetric in the last $m-1$ variables.  Then,
the equations characterizing $\Ker \delta^r_{2}$ are
\begin{equation}
  \label{eq:cal_E}
  \E_{ijk}(e_{l_1}^{r_1} \dots  \, e_{l_s}^{r_s})=0,  \quad r_1+\dots +r_s= m.
\end{equation}
Here we are interested in the equations containing components with at least
one vertical vector. Many of the equations \eqref{eq:cal_E} are trivially
satisfied because the summed terms are zeros, listed in \eqref{eq:1}.  We will
investigate the remaining nontrivial equations whose complete list is
\begin{subequations}
  \label{eq:delta_2}
  \begin{align}
    \label{eq:delta_2_1}
    & \mathcal{E}(e_i,e_j,e_k,v^{m-1}_{1})=0, &
    \\
    \label{eq:delta_2_2}
    & \mathcal{E}(e_i,e_j,e_k,v^{m-1}_{2})=0, &
    \\
    \label{eq:delta_2_3}
    & \mathcal{E}(e_i,e_j,e_k,h^{m-l-1}_{1},v^{l}_{1})=0, & \ l\geq 2, 
    \\
    \label{eq:delta_2_4}
    & \mathcal{E}(e_i,e_j,e_k,h^{m-2}_{1},v_1)=0, &
    \\
    \label{eq:delta_2_5}
    & \mathcal{E}(e_i,e_j,e_k,h^{m-l-1}_{2},v^{l}_{2})=0, & \ l\geq 2, 
    \\
    \label{eq:delta_2_6}
    & \mathcal{E}(e_i,e_j,e_k,h^{m-2}_{2},v_2)=0, &
    \\
    \label{eq:delta_2_7}
    & \mathcal{E}(e_i,e_j,e_k,h^{l}_{1},h^{m-l-2}_{2},v_2)=0, & \ l\geq 1, 
    \\
    \label{eq:delta_2_8}
    & \mathcal{E}(e_i,e_j,e_k,h^{m-2}_{1},v_2)=0, &
    \\
    \label{eq:delta_2_9}
    & \mathcal{E}(h_i,v_j,v_k,h^{l}_{1},h^{s}_{2},h^{m-(l+s+1)}_{3})=0, & 
     s, l \geq 0,
    \\
    \label{eq:delta_2_10}
    & \mathcal{E}(h_i,h_j,v_k,h^{l}_{1},h^{s}_{2},h^{m-(l+s+1)}_{3})=0. &
  \end{align}
\end{subequations}
We will determine how many independent equations are between
\eqref{eq:delta_2}.  Because of the skew-symmetric property of
\eqref{eq:cal_E} in the indices $i,j,k$ it is enough to consider equations
with $i \< j\< k$ in each blocks.  We set
$H^\c\!:=\!\{1, \dots , 6 \}\setminus H$, and whenever $p$ appears in the
formulas below, then it completes the some of the exponents to the value
$m-1$.
\begin{enumerate}
\item 
  Equation \eqref{eq:delta_2_1}: trivially holds if $i,j,k\in \{ 1,4\}^\c$.
  The remaining equations are linearly independent: The pivot tensor
  components are $B(e_i,e_j,e_1, e_4^{m-1})$ for $i<j$, \ $i,j \neq 4$ and
  $B(e_i,e_j,e_4^{m})$ for $i<j$, \ $i,j \in \{ 1,4\}^\c$.  Therefore, we have
  $N_1=16$ in\-de\-pendent equations in this block.

\item 
  Equation \eqref{eq:delta_2_2} is similar to the previous one: It holds
  trivially if $i,j,k\in \{ 2,5\}^\c$.  The remaining equations are linearly
  independent: The pivot tensor components are $B(e_i,e_j,e_5^{m})$ for $i<j$,
  $i,j\neq 5$ and $B(e_i,e_j,e_2,e_5^{m-1})$ for $i<j$, $i,j\in
  \{2,5\}^\c$. Here we have $N_2=16$ independent equations.

\item 
  Equation \eqref{eq:delta_2_3}: For each fixed $l \geq 2$ four equations
  trivially hold when $i,j,k\in \{1,4\}^\c$.  Further six nontrivial relations
  exist:
  \begin{equation}
    \label{of:111}
    \E_{1ij}(e_1^{p},e_4^{l})=\E_{4ij}(e_1^{p+1},e_4^{l-1}), \qquad i\<j,
    \quad i,j\in \{1,4\}^c.  
  \end{equation} 
  For the rest we can observe that the equations with $l$ and $(l-1)$
  $e_4$-exponents are related. The only independent equations are
  $\E_{1jk}(e_1^{m-l},e_4^{l-1})=0$ where $i\<j$, $2\leq l$.  Therefore, we
  have $N_3=\C{5,2}\, \Cr{2,m-4}$ independent equations.
		
\item 
  Equation \eqref{eq:delta_2_5} is similar to the previous one: For each fixed
  $l \geq 2$ there are four trivial equations when $i,j,k \in \{2,5\}^\c$ and
  six nontrivial relations
  \begin{equation}
    \label{of:112}
    \E_{2ij}(e_2^{p},e_5^{l})=\E_{5ij}(e_2^{p+1},e_5^{l-1}), \qquad
    i\<j, \quad i,j\in \{2,5\}^c. 
      \end{equation}
  The equations with $l$ and $(l-1)$ $e_5$-exponents are related and the
  independent equations are $\E_{2jk}(e_2^{m-l},e_5^{l-1})$ where $i\<j$,
  $2\leq l$.  Therefore, we have $N_4=\C{5,2}\, \Cr{2,m-4}$ independent
  equations.
	
\item 
  Equation \eqref{eq:delta_2_4} and \eqref{eq:delta_2_6}: there are $N_5=32$
  independent equations given by $\E_{ijk}(e_1^{m-2},e_4)$ and
  $\E_{ijk}(e_1^{m-2},e_4)$ where $i=1,2$, \ $i\< j \< k$.

\item 
  Equation \eqref{eq:delta_2_7}: when $i,j,k\in \{1,2\}^\c$, then the equation
  is trivially satisfied. Moreover, there are six nontrivial relations in this
  block:
  \begin{equation}
    \label{of:113}
    \E_{1jk}(e_1^{l},e_2^{p},e_5)=\E_{2jk}(e_1^{l+1},e_2^{p-1},e_5),
    \qquad j\<k, \ j,k \in \{1,2\}^c. 
  \end{equation}
  The independent equations are $\E_{1jk}(h_1^{l+1},h_2^{p},v_2)$ where
  $1\<j\<k$, $1\<l$, therefore here there are $N_6=\C{5,2}\cdot \Cr{2,m-4}$
  independent equations.  We remark that the equations with interchanging
  $v_1$ and $v_2$ do not need to be considered because of \eqref{eq:314}.

\item 
  Equation \eqref{eq:delta_2_8}: there are four trivial equations and six more
  nontrivial relations, exactly as in the case of \eqref{eq:delta_2_7}.
  Moreover, there are further relations coming from \eqref{eq:314} since
  \begin{align*}
	\E_{1jk}(e_1^{m-3},e_2,e_4)=\E_{2jk}(e_1^{m-2},e_4), 
    \qquad j<k, \ j,k\in \{1,2\}^c
  \end{align*}
  The independent equations are $\E_{12k}(e_1^{m-2},e_5)$, $3 \leq k \leq 6$,
  so we have here $N_7=4$ independent equations.

\item 
  Equation \eqref{eq:delta_2_9}: For $0\!\leq l \!\leq m\!-\!1$ (resp.
  $0\!\leq \!l,s \!\leq\! m\!-\!1$) we have the following relations:
  \begin{alignat*}{2}
    & \E_{1jk}(h_1^{l},h_2^{p}) =\E_{2jk}(h_1^{l+1},h_2^{p-1})
    +\E_{12k}(e_1^{l},e_2^{p-1},e_j), && j,k \geq 4, 
    \\
    & \E_{1jk}(h_1^{l},h_2^{s},h_3^{p})
    =\E_{3jk}(h_1^{l+1},h_2^{s},h_3^{p-1}), && j,k \geq 4, 
    \\
    & \E_{2jk}(h_1^{l},h_2^{s},h_3^{p})
    =\E_{3jk}(h_1^{l},h_2^{s+1},h_3^{p-1}), && j,k \geq 4, 
    \\
    & \E_{1j6}(h_1^{l},h_2^{p},h_3) =\E_{3j6}(h_1^{l+1},h_2^{p})
    +\E_{136}(h_1^{l},h_2^{p},v_j), && j=4,5, 
    \\
    & \E_{2j6}(h_1^{l},h_2^{p},h_3) =\E_{3j6}(h_1^{l},h_2^{p+1})
    +\E_{136}(h_1^{l-1},h_2^{p+1},v_j), \quad && j=4,5. 
  \end{alignat*}
  The $\E_{1jk}(h_1^{m-1})$, $\E_{3jk}(h_1^l, h_2^s, h_3^p)$, and
  $\E_{2jk}(h_1^l, h_2^p)$, $4 \! \leq \!j \!< \!k \!\leq \!6$,
  $l,s \! \geq \!0$ give $N_8=\C{3,2}+\C{3,2}\Cr{3,m-1}+\C{3,2}\Cr{2,m-1}$
  independent new equations.
\item 
  Equation \eqref{eq:delta_2_10}: for $0 \leq l,s,p\leq m-1$, $l+s+p=m-1$ the
  relations are
  \begin{displaymath}
	\E_{12i}(h^{l}_{1},h^{s}_{2},h^{p}_{3})
    =\E_{13i}(h^{l}_{1},h^{s+1}_{2},h^{p-1}_{3})
	-\E_{23i}(h^{l+1}_{1},h^{s}_{2},h^{p-1}_{3}), \quad   i=4,5,6,
  \end{displaymath}
  where we use that $\E_{123}(h_1^l,h_2^{s},h_3^{p-1},v_i)=0$.  Hence the
  independent equations are $\E_{12i}(h_1^l, h_2^p)$,
  $\E_{13i}(h_1^l, h_2^s, h_3^p)$, $\E_{23i}(h_1^s, h_2^l, h_3^p)$ where
  $0\!\leq \! l$, $1\! \leq \! s$, $l\+s\+p\+1\! \leq \!m$,
  $4 \!\leq \! i \!\leq \! 6$.  Therefore there are
  $N_9=\C{3,1}\, \Cr{2,m-1}+2\, \C{3,1}\, \Cr{3,m-1}$ independent equations.
\end{enumerate}
\medskip

\noindent
Finally, we get that \vspace{-10pt}
\begin{equation}
  \label{eq:nul_d_2}
  \dim \Ker \delta^{m}_{2}=\dim (\Lambda^{2}T^{*}\otimes
  g_{m})-\sum^{9}_{i=0}N_i=\frac{5m^{2}+53m+38}{2}.
\end{equation}
Comparing \eqref{eq:rank_d_1_m} and \eqref{eq:nul_d_2} we get
$\mathrm{rank}(\delta^{m}_{1})= \mathrm{nul} (\delta^{m}_{2})$ therefore,
\begin{math}
  \mathrm{Im} \, \delta^{m}_{1} \!= \!\mathrm{Ker} \, \delta^{m}_{2}
\end{math}
and $H^{m,2}=0$.

\bigskip

\begin{theorem}
  \label{thm:form_int}
  Let $S$ be a non-isotropic spray on a 3-dimensional manifold with distinct
  Jacobi eigenvalues.  Then the PDE operator $\P$ defined in \eqref{eq:P} is
  formally integrable if and only if
  \begin{enumerate}
  \item \label{item:1}
    $\Phi' \in Span \{J, \Phi\}$
  \item \label{item:2} The compatibility condition \eqref{third_order_cond} is
    satisfied.
  \end{enumerate}
\end{theorem}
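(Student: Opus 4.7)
The plan is to verify the hypotheses of the Goldschmidt version of Theorem \ref{Cartan_Kahler} at the level of the first prolongation of $\P$. Since Lemma \ref{t:2ac} already guarantees that $H^{m,2}=0$ for every $m\geq 3$, the prolonged symbol is 2-acyclic, so formal integrability of $\P$ reduces to the surjectivity of the two projections $\overline{\pi}_2\colon Sol_3\to Sol_2$ and $\overline{\pi}_3\colon Sol_4\to Sol_3$; all deeper projections are then automatically onto.

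For the surjectivity of $\overline{\pi}_2$ I would invoke Proposition \ref{prop:2_order_integr_cond}, which produces the first compatibility conditions \eqref{iphiphi} and \eqref{omegavihjhk}. In dimension three, Remark \ref{thm:comp_1} shows that \eqref{iphiphi} is automatic, while Remark \ref{thm:comp_2} recasts \eqref{omegavihjhk} as $i_{\Phi'}\Omega=0$. The remaining task is to show that this vanishing, viewed as an identity on $Sol_2$, is equivalent to the algebraic statement $\Phi'\in Span\{J,\Phi\}$, i.e. condition (1). For $(\Leftarrow)$, both $i_J\Omega$ and $i_\Phi\Omega$ vanish on any second-order solution (the former because $\Omega=dd_JF$ has the semi-basic structure recalled in Remark \ref{rem:Omega_adapted_P_3}, the latter from the equation $P_\Phi F=0$), so any linear combination $\alpha J+\beta\Phi$ contracts $\Omega$ to zero. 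For $(\Rightarrow)$, I would expand $\Phi'$ in the adapted basis \eqref{eq:basis_1}, use the diagonal normal form of $\Omega$ from Remark \ref{rem:Omega_adapted_P_3} together with the positivity of the nonzero entries $a_{ii}$ as $F$ ranges over admissible solutions, and combine this with the identity $\Phi'(S)=0$ to force $\Phi'$ into $Span\{J,\Phi\}$.

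For the surjectivity of $\overline{\pi}_3$ I would apply Proposition \ref{prop:3_order_integr_cond}: the nontriviality of $H^{2,2}$ in dimension three produces exactly one new compatibility condition at this level, and it is precisely \eqref{third_order_cond}, i.e.\ condition (2). Assembling these pieces, Theorem \ref{Cartan_Kahler} applied to the first prolongation of $\P$ gives formal integrability if and only if both (1) and (2) hold.

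The step I expect to be the main obstacle is the $(\Rightarrow)$ direction in the equivalence between $i_{\Phi'}\Omega\equiv 0$ on $Sol_2$ and $\Phi'\in Span\{J,\Phi\}$. A naive component computation using Remark \ref{rem:Omega_adapted_P_3} only pins down those entries of $\Phi'$ that couple to the nonvanishing coefficients $a_{ii}$ of $\Omega$; retrieving the full algebraic form requires additionally the defining formula $\Phi'=v\circ[S,\Phi]\circ h$, the structural identity $\Phi'(S)=0$, and the fact that $S$ itself is the eigenvector of $\Phi$ with vanishing eigenvalue $\lambda_n=0$, in order to rule out any residual cross-components and obtain exactly the two-parameter family $\alpha J+\beta\Phi$.
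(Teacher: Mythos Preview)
Your proposal is essentially the paper's own proof: both assemble Proposition~\ref{prop:2_order_integr_cond} with Remarks~\ref{thm:comp_1}--\ref{thm:comp_2} for the lift $Sol_2\to Sol_3$, Proposition~\ref{prop:3_order_integr_cond} for $Sol_3\to Sol_4$, and Lemma~\ref{t:2ac} for the vanishing of $H^{m,2}$ ($m\geq 3$), then conclude via Theorem~\ref{Cartan_Kahler}. The paper is in fact terser than you are---it does not spell out the equivalence between ``$i_{\Phi'}\Omega=0$ on $Sol_2$'' and condition~(1), but simply records that (1) suffices for surjectivity of $\overline{\pi}_2$; the $(\Rightarrow)$ direction you flag as the main obstacle is left implicit in the paper as well.
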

\begin{proof}
  Using Proposition \ref{prop:2_order_integr_cond} with Remark
  \ref{thm:comp_1} and Remark \ref{thm:comp_2} we get that the first condition
  of Theorem \ref{thm:form_int} guaranties that any $2\ts{nd}$ order solution
  of $\P$ can be prolonged into a $3\ts{rd}$ order solution.  Using
  Proposition \ref{prop:3_order_integr_cond} we get that if the second
  condition of Theorem \ref{thm:form_int} holds then any $3\ts{nd}$ order
  solution of $\P$ can be prolonged into a $4\ts{th}$ order
  solution. Moreover, Lemma \ref{t:2ac} shows that the Spencer cohomology
  groups $H^{m,2}$ are trivial and therefore there is no higher order
  compatibility condition for $\P$ that is, any $m\ts{th}$ order solution can
  be prolonged into a $(m+1)\ts{st}$ order solution, $m\geq 4$.  Therefore, we
  obtain the formal integrability of $\P$.
\end{proof}

\begin{corollary}
  \label{cor:P_met}
  Let $S$ be a non-isotropic analytic spray on a 3-dimensional analytic
  manifold with distinct Jacobi eigenvalues. If the higher order compatibility
  condition is reducible and the conditions \ref{item:1} and \ref{item:2} of
  Theorem \ref{thm:form_int} are satisfied, then $S$ is locally projective
  metrizable.
\end{corollary}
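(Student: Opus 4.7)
The plan is to combine the formal integrability provided by Theorem~\ref{thm:form_int} with the analytic version of the Cartan--K\"ahler realization theorem. By hypothesis, conditions~\ref{item:1} and~\ref{item:2} hold, so Theorem~\ref{thm:form_int} tells us that $\P$ is formally integrable; moreover Lemma~\ref{t:2ac} ensures that its symbol has no obstructions in $H^{m,2}$ for $m\ge 3$. In the analytic category, formal integrability of a regular linear PDE operator with 2-acyclic symbol means that every formal power-series solution based at a point $x_0\in\TM$ is the Taylor expansion of an honest analytic local solution.

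The first concrete step is to fix a base point $x_0\in\TM$ and exhibit a 2\ts{nd} order jet $j_2F_{x_0}$ on which $\P$ vanishes. By Remark~\ref{rem:Omega_adapted_P_3}, once $P_\Gamma F=P_\Phi F=0$, the 2-form $\Omega$ is pinned down (in an adapted frame) by the two values $a_{11}(x_0),a_{22}(x_0)$, which are required to be positive. I would therefore pick arbitrary positive numbers $a_{11}(x_0),a_{22}(x_0)>0$ and build a 2\ts{nd} order jet compatible with $\mathcal{L}_CF=F$ and with the prescribed $\Omega_{x_0}$. The reducibility hypothesis enters precisely here: by Corollary~\ref{adaptedbase_corollary} together with the vanishing of $\beta^i_{ij},\gamma^i_{ij}$, the 3\ts{rd} order compatibility \eqref{eq:phi_loc} collapses to the purely algebraic equation $\sum_k\alpha^k_{ij}a_{kk}=0$, and so it either is identically zero or imposes a linear constraint on the $a_{kk}(x_0)$ that can be satisfied with positive values.

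Next, I would iteratively prolong this 2\ts{nd} order jet to all orders. Condition~\ref{item:1}, combined with Proposition~\ref{prop:2_order_integr_cond} and Remarks~\ref{thm:comp_1}--\ref{thm:comp_2}, lifts the jet to order~3; condition~\ref{item:2} together with Proposition~\ref{prop:3_order_integr_cond} lifts it to order~4; and Lemma~\ref{t:2ac} ensures that every further lift $\bar\pi_m:Sol_{m+1}\to Sol_m$ ($m\ge 4$) is surjective. We thus obtain an infinite-order formal solution at $x_0$. Theorem~\ref{Cartan_Kahler} applied in the analytic setting now produces an honest analytic function $F$ on a neighborhood of $x_0$ whose $\infty$-jet at $x_0$ is the formal solution constructed above.

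Finally I would verify that $F$ is a Finsler function whose canonical spray is projectively equivalent to $S$. Continuity of $a_{ii}$ and the strict positivity $a_{ii}(x_0)>0$ imply that the Hessian $\partial^2F/\partial y^i\partial y^j$ remains positive quasi-definite in a neighborhood of $x_0$, so $F$ is a local Finsler function. Since $F$ solves $i_\Gamma\Omega=0$ and $hS=S$, applying $i_S$ gives $i_S\Omega=0$, i.e.\ the Rapcs\'ak equation. Rapcs\'ak's theorem then yields $S_F\sim S$, proving local projective metrizability. The main technical subtlety I anticipate lies in the initial step: ensuring that one can simultaneously satisfy the pointwise algebraic content of the 3\ts{rd} order condition~\eqref{eq:phi_loc} and the positivity $a_{ii}(x_0)>0$. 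This is exactly where the reducibility assumption is crucial, since without it that condition would involve derivatives of the $a_{ii}$ and couple nontrivially to the higher-order jet data.
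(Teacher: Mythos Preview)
Your approach is the same as the paper's: invoke Theorem~\ref{thm:form_int} to get formal integrability of $\P$, apply Cartan--K\"ahler in the analytic category to realize a chosen 2\ts{nd} order jet with $a_{11}(x_0),a_{22}(x_0)>0$ as an actual analytic solution, and conclude via Rapcs\'ak that $S_F\sim S$. The paper's own proof is two sentences to exactly this effect.

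One point deserves correction. Your handling of the reducibility hypothesis is muddled: you write that \eqref{eq:phi_loc} ``either is identically zero or imposes a linear constraint on the $a_{kk}(x_0)$ that can be satisfied with positive values,'' but the second alternative cannot occur here, and your assertion that such a constraint could always be met with positive values is unjustified (indeed it can fail---see Remark~\ref{rem:eta_i} and the discussion opening Section~\ref{sec:5}). The correct logic is simpler: since conditions~\ref{item:1} and~\ref{item:2} are assumed, Theorem~\ref{thm:form_int} already gives formal integrability of $\P$, which by definition means \emph{every} 2\ts{nd} order solution lifts to all orders. Hence the positive values $a_{11}(x_0),a_{22}(x_0)>0$ may be chosen freely, with no algebraic constraint to negotiate; the reducibility hypothesis is not actually used in the argument (nor in the paper's proof).
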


\begin{proof}
  Indeed, in the analytic case, for any initial data there exists a local
  analytic solution of $\P$. Choosing an initial data compatible with the
  positive quasi-definite criteria, one can obtain an analytic solution $F$
  such that $F^2$ is locally positive definite. The spray associated to the
  Finsler function $F$ will be projective equivalent to $S$, that is $S$ is
  locally projective metrizable.
\end{proof}

\bigskip

\subsection{Reducible case: the complete system}
\label{sec:5}

In the previous subsection we investigated the integrability of the extended
Rapcs\'ak system completed with curvature condition. When the conditions
\ref{item:1}.~or \ref{item:2}.~of Theorem \ref{thm:form_int} is not satisfied,
then the PDE system $\P$ corresponding to the projective metrizability is not
integrable.  As usual, the necessary compatibility condition should be added
to the system and restart the analysis of the enlarged system. In this section
we consider the case where this extra compatibility condition is of second
order. 

We remark that the extended Rapcs\'ak system is composed by equations on the
components the 2-form $\Omega=dd_JF$ whose potentially nonzero components are
listed in \eqref{eq:g_ii}. When $\dim M=3$, there are only two such
components: $\Omega(v_1, h_1)$ and $\Omega(v_2, h_2)$.  The extra
compatibility condition gives a new relation between these two components. In
the adapted basis this equation can be written as
\begin{equation}
  \label{eq:reduced_comp}
  \eta_1\, \Omega (v_1,h_1)+\eta_2\,\Omega(v_2,h_2)=0,
\end{equation}
where $\eta_1$ and $\eta_2$ are well defined function on $\TM$ determined by
the spray $S$.
\begin{remark}
  \label{rem:eta_i}
  If \eqref{eq:reduced_comp} is nontrivial but one of the $\eta_i$ vanishes,
  then one of the $a_{ii}=\Omega (v_i,h_i)$, $i=1,2$ must be
  zero. Consequently, $\P$ has no positive quasi-definite solution and the
  spray is not projective metrizable.
\end{remark}

\noindent
In the sequel we suppose that $\eta_1 \neq 0$ and $\eta_2 \neq 0$.  The second
order PDE operator corresponding to \eqref{eq:reduced_comp} will be denoted by
\begin{displaymath}
  P_\Psi: C^\infty(\TM) \to C^\infty(\TM), \qquad 
  P_\Psi(F)= \eta_1\Omega (v_1,h_1)+\eta_2\Omega(v_2,h_2).
\end{displaymath}
The PDE operator corresponding the completed system is $\PP:=(\P,P_\Psi)$
i.e.:
\begin{equation}
  \label{eq:PP}
  \PP=(P_C,  P_{\Gamma}, P_{\Phi},P_\Psi).
\end{equation}

\begin{remark}
  \label{lemma:PP_Omega}
  If $F$ is a solution of $\PP$ then for any $k \!\in\! \mathbb N$ the
  $\mathcal{L}^k\Omega$ (the $k\ts{th}$ order Lie derivatives of $\Omega$) can
  be calculated \emph{algebraically} from $\Omega$.
\end{remark}
\begin{proof}
  It is sufficient to check this property for the Lie derivatives with respect
  to the element of the adapted basis \eqref{eq:basis_1}. According the Remark
  \ref{rem:Omega_adapted_P_3}, the only nonzero components of $\Omega$ are
  $a_{11}=\Omega(v_1, h_1)$ and $a_{22}=\Omega(v_2, h_2)$.  Using $d\Omega=0$
  and equation \eqref{eq:P_3_basis} with \eqref{eq:reduced_comp} we can find
  for the Lie derivatives of $a_{11}$ the following:
  \begin{alignat*}{1}
    \mathcal{L}_{h_2}a_{11}\=h_2\Omega(v_1,h_1) &\= \Omega([h_2,v_1],h_1) \+
    \Omega([v_1,h_1],h_2) \+ \Omega([h_1, h_2],v_2),
    \\
    \mathcal{L}_{v_2}a_{11}\=v_2\Omega(v_1,h_1) &\= \Omega([v_2,v_1],h_1) \+
    \Omega([v_1,h_1],v_2) \+ \Omega([h_1,v_2],v_1),
    \\
    \mathcal{L}_{h_1}a_{11}\=h_1\Omega(v_1,h_1) &
    \=h_1\big(\tfrac{\eta_2}{\eta_1}\Omega(v_2,h_2)\big) \=
    h_1\big(\tfrac{\eta_2}{\eta_1}\big) \Omega(v_2,h_2) \+
    \tfrac{\eta_2}{\eta_1} h_1 \Omega(v_2,h_2)
    \\
    & \= h_1\big(\tfrac{\eta_2}{\eta_1}\big) \Omega(v_2,h_2) \+
    \tfrac{\eta_2}{\eta_1} \big(\Omega([h_1,v_2],h_2) \+ \Omega([v_2,h_2],h_1)
    \+ \Omega([h_2,h_1], v_2) \big),
    \\
    \mathcal{L}_{v_1}a_{11}\= v_1\Omega(v_1,h_1) &
    \=v_1\big(\tfrac{\eta_2}{\eta_1}\Omega(v_2,h_2)\big) \=
    v_1\big(\tfrac{\eta_2}{\eta_1}\big) \Omega(v_2,h_2) \+
    \tfrac{\eta_2}{\eta_1} v_1 \Omega(v_2,h_2)
     \\
     & \= v_1\big(\tfrac{\eta_2}{\eta_1}\big) \Omega(v_2,h_2) \+
     \tfrac{\eta_2}{\eta_1} \big(\Omega([v_1,v_2],h_2) \+
     \Omega([v_2,h_2],v_1) \+ \Omega([h_2,v_1], v_2) \big).
  \end{alignat*}
  On the right hand side of the above expressions there are only terms
  containing $\Omega$ but not its derivatives.  We remark that all these terms
  can be expressed as linear combinations of $a_{11}$ and $a_{22}$.  Any
  further derivatives can be computed the same way. We can find the formula
  for the derivatives of $a_{22}$ by interchanging the indexes
  $1 \leftrightarrow 2$.
\end{proof}

\begin{corollary}
  The system \eqref{eq:PP} is complete in the sense that either all
  compatibility conditions are satisfied or the spray is not projective
  metrizable.
\end{corollary}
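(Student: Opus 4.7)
The plan is to exploit the preceding Remark: for any solution $F$ of $\PP$, every iterated Lie derivative of $\Omega$ can be written as an algebraic (in fact linear) expression in the two possibly nonzero components $a_{11}=\Omega(v_1,h_1)$ and $a_{22}=\Omega(v_2,h_2)$, with coefficients determined by $S$. This reduces the question of hidden obstructions to a purely algebraic problem in two unknowns.

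First I would formalize what shape an arbitrary higher-order compatibility condition of $\PP$ can take. By the Spencer–Goldschmidt framework recalled in Remark \ref{sec:remark_surjectivity}, any such obstruction, applied to a formal solution, yields a relation that is linear in $F$ and polynomial in its derivatives, equivalently a linear relation in components of $\Omega$ and of $\mathcal{L}^k\Omega$ for various $k$. Using \eqref{eq:P_3_basis} to eliminate vanishing components and then applying the preceding Remark inductively to rewrite each derivative, the condition reduces to an equation of the form
\begin{equation*}
  \alpha \, a_{11}+\beta \, a_{22}=0,
\end{equation*}
for some $\alpha, \beta \in C^\infty(\TM)$ depending only on $S$.

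Then comes the dichotomy at a point $x\in \TM$. If $(\alpha,\beta)_x$ is proportional to $(\eta_1,\eta_2)_x$, then the putative new condition is a pointwise consequence of $P_\Psi F=0$ and contributes no new information, so all compatibility conditions are already present in $\PP$. Otherwise, the $2\times 2$ linear system
\begin{equation*}
  \eta_1 a_{11}+\eta_2 a_{22}=0, \qquad \alpha \, a_{11}+\beta \, a_{22}=0
\end{equation*}
is non-singular at $x$, forcing $a_{11}(x)=a_{22}(x)=0$; but by \eqref{eq:g_ii} together with Remark \ref{rem:eta_i}, this violates the positive quasi-definiteness required of a Finsler solution, so $\PP$ admits no admissible $F$ and $S$ is not projective metrizable. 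This is exactly the statement.

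The main obstacle I anticipate is step one: ensuring that the inductive rewriting of arbitrary $\mathcal{L}^k\Omega$ through the preceding Remark genuinely terminates and produces only the linear form $\alpha a_{11}+\beta a_{22}=0$, without residual derivative contributions surviving somewhere in the algebra. Making this rigorous requires careful bookkeeping of how the identities \eqref{eq:P_3_basis}, the relation $d\Omega=0$, the definition of $P_\Psi$, and the iterated prolongations of $\PP$ interact; the preceding Remark however supplies precisely the mechanism that closes this reduction, so the obstacle is one of organization rather than of substance.
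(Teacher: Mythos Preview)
Your proposal is correct and follows essentially the same route as the paper: invoke the preceding Remark to reduce any putative higher-order compatibility condition of $\PP$ to a linear relation $\alpha\,a_{11}+\beta\,a_{22}=0$, and then argue by a two-case dichotomy that either this relation is redundant with $P_\Psi F=0$ or it forces $a_{11}=a_{22}=0$, killing positive quasi-definiteness. The paper's own proof is simply the terse version of this, citing Remark~\ref{lemma:PP_Omega} and Remark~\ref{rem:Omega_adapted_P_3} and concluding that only the trivial solution survives.
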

\begin{proof}
  Indeed, according to Remark \ref{lemma:PP_Omega}, if there is a non-trivial
  extra compatibility condition for $\PP$ then it can be expressed
  algebraically with $\Omega$, that would give a new and independent linear
  (homogeneous) equation between $a_{11}$ and $a_{22}$. Consequently, from
  Remark \ref{rem:Omega_adapted_P_3} we get that only the trivial solution
  ($a_{ij}=0$) exists.
\end{proof}

\subsection*{Compatibility conditions}

To compute the compatibility conditions of $\PP$, we can follow the methods
presented in the previous chapter.  The symbol of $\PP$ and its prolongations
are
\begin{alignat*}{2}
  \sigma_{2}(P_\Psi): & S^2T^* \longrightarrow \mathbb{R}, \quad &&
  \big(\sigma_2(P_\Psi)A^2\big) = f_1A^2(v_1,v_1)+f_2A^2(v_2,v_2),
  \\
  \sigma_{3}(P_\Psi): & S^3T^* \longrightarrow T^{*}, \quad &&
  \big(\sigma_3(P_\Psi)A^3\big)(X) = f_1A^3(X,v_1,v_1)+f_2A^3(X,v_2,v_2),
  \\
  \sigma_{4}(P_\Psi): & S^4T^* \longrightarrow S^2T^{*}, \quad &&
  \big(\sigma_4(P_\Psi)A^4\big)(X) =
  f_1A^4(X,Y,v_1,v_1)+f_2A^4(X,Y,v_2,v_2),
\end{alignat*}
$A^k\in S^{k}T^{*}$ and $X,Y \in T$. Using the map $\tau$ and $\tau_h$ defined
in \eqref{eq:tau} and \eqref{highero} respectively, we can consider the
extended obstruction map:
\begin{displaymath}
  \widetilde{\tau}:=(\tau, \, \widetilde{\tau}_{1}, \, \widetilde{\tau}_{2}), 
  \qquad   
  \widetilde{\tau}^1:=(id \o \widetilde{\tau}, \, \tau_h, \, 
  \widetilde{\tau}_{3}, \,  \widetilde{\tau}_{4}, \, 
  \widetilde{\tau}_{5}, \, \widetilde{\tau}_{6})
\end{displaymath}
where
\begin{alignat*}{2}
  &\widetilde{\tau}_{1} (B)=B_\Psi(C) \! - \!  f_1B_{C}(v_1,v_1) \! - \!
  f_2B_{C}(v_2,v_2),
  \\
  &\widetilde{\tau}_{2} (B)=B_\Psi(S) \! - \! \tfrac{f_1}{2}B_{\Gamma}(v_1, S,h_1)
  \! - \! \tfrac{f_2}{2}B_{\Gamma}(v_2, S,h_2) \! - \!
  \tfrac{f_1}{\lambda_1}B_{\Phi}(h_1,h_1,S)\! - \!
  \tfrac{f_2}{\lambda_2}B_{\Phi}(h_2,h_2,S),
  \\
  &\widetilde{\tau}_{3} (\widetilde{B})=\widetilde{B}_\Psi(v_1,v_2) \! - \!
  \tfrac{f_1}{\lambda_1\! - \! \lambda_2}\widetilde{B}_{\Phi}(v_1,v_1,h_1,h_2)
  \! - \! \tfrac{f_2}{\lambda_1\! - \!
    \lambda_2}\widetilde{B}_{\Phi}(v_2,v_2,h_1,h_2), \tfrac{}{}
  \\
  &\widetilde{\tau}_{4} (\widetilde{B})=\widetilde{B}_\Psi(h_1,h_2) \! - \!
  \tfrac{f_1}{2}\widetilde{B}_{\Gamma}(h_1,v_1,h_2,h_1) \! - \!
  \tfrac{f_2}{2}\widetilde{B}_{\Gamma}(h_2,v_2,h_1,h_2)
  \\
  & \qquad \qquad \qquad\qquad \quad\! - \! \tfrac{f_1}{\lambda_1\! - \!
    \lambda_2}\widetilde{B}_{\Phi}(h_1,h_1,h_1,h_2) \! - \!
  \tfrac{f_2}{\lambda_1\! - \!
    \lambda_2}\widetilde{B}_{\Phi}(h_2,h_2,h_1,h_2),
  \\
  &\widetilde{\tau}_{5} (\widetilde{B})=\widetilde{B}_{\Psi}(h_1,v_2) \!  - \!
  \tfrac{f_1}{\lambda_1\! - \!
    \lambda_2}\widetilde{B}_{\Phi}(h_1,v_1,h_1,h_2)
  \! - \! \tfrac{f_2}{2}\widetilde{B}_{\Gamma}(v_2,v_2,h_1,h_2) \! - \!
  \tfrac{f_2}{\lambda_1\! - \! \lambda_2}\widetilde{B}_{\Phi}(v_2,h_2,h_1,h_2),
  \\
  & \widetilde{\tau}_{6} (\widetilde{B})=\widetilde{B}_{\Psi}(v_1,h_2)
  +\tfrac{f_1}{2}\widetilde{B}_{\Gamma}(v_1,v_1,h_1,h_2)
  \! - \! \tfrac{f_1}{\lambda_1\! - \!  \lambda_2}
  \widetilde{B}_{\Phi}(h_1,v_1,h_1,h_2) \! - \!  \tfrac{f_2}{\lambda_1\! - \!
    \lambda_2}\widetilde{B}_{\Phi}(h_2,v_2,h_1,h_2),
\end{alignat*} 
and
\begin{math} 
  B\!=\!(B_C,B_{\Gamma},B_{\Phi},B_\Psi) 
\end{math} 
denotes an element of
\begin{math} 
  T^{*} \o \big(T^* \!\times\! \Lambda^{2}T^{*}_{v} \!\times\!
  \Lambda^{2}T^{*}_{v} \!\times \!T^{*}\big)
\end{math} 
and
\begin{math} 
  \widetilde{B}= (\widetilde{B}_C,\widetilde{B}_{\Gamma},\widetilde{B}_{\Phi},
  \widetilde{B}_\Psi)
\end{math} 
denotes an element of
\begin{math} 
  S^2T^{*} \o \big(T^* \!\times\! \Lambda^{2}T^{*}_{v} \!\times\!
  \Lambda^{2}T^{*}_{v} \!\times \!T^{*}\big).
\end{math} 
Then
\begin{equation}
  \label{egzakt22} 
  \mathrm{Im}\, \sigma_3(\PP) = \mathrm{Ker} \,  \widetilde{\tau},
  \qquad \mathrm{Im}\, \sigma_4(\PP) = \mathrm{Ker} \,   \widetilde{\tau}^1.
\end{equation} 
The compatibility conditions of the PDE operator $\PP$ can be calculated:
\begin{alignat*}{1} 
  \widetilde{\tau}_{1} (\nabla \PP F) &=(\mathcal{L}_Cf_1)\,\Omega(v_1,h_1)
  +(\mathcal{L}_Cf_2)\, \Omega(v_2,h_2),
  \\
  \widetilde{\tau}_{2} (\nabla \PP F)
  &=2f_1\Omega([S,v_1],h_1)+2f_2\Omega([S,v_2],h_2) +
  (\mathcal{L}_Sf_1)\Omega(v_1,h_1) + (\mathcal{L}_Sf_2)\Omega(v_2,h_2),
  \\
  \widetilde{\tau}_{3} (\nabla^2 \PP F) & =(v_1(v_2f_1))\Omega(v_1,h_1)
  +(v_1(v_2f_2))\Omega(v_2,h_2) +f_2[v_1,v_2]\Omega(v_2,h_2)
  \\
  & +(v_2f_1)v_1\Omega(v_1,h_1) +(v_1f_1)\big(\csum\Omega([v_2,v_1],h_1)\big)
  +(v_2f_2)\big(\csum\Omega([v_1,v_2],h_2)\big)
  \\
  & +(v_1f_2)v_2\Omega(v_2,h_2) +f_1v_1\big(\csum\Omega([v_2,v_1],h_1)\big)
  -f_2v_2\big(\csum\Omega([v_2,v_1],h_2)\big),
  \\
  \widetilde{\tau}_{4} (\nabla^2 \PP F) &= (h_1(h_2f_1))\Omega(v_1,h_1) +
  (h_2f_1)h_1\Omega(v_1,h_1) + (h_1f_1)h_2\Omega(v_1,h_1)
  \\
  &+ h_1(h_2f_2)\Omega(v_2,h_2) + (h_2f_2)h_1\Omega(v_2,h_2) +
  (h_1f_2)h_2\Omega(v_2,h_2)
  \\
  & + f_2[h_1,h_2]\Omega(v_2,h_2) + f_1h_1\big(\csum\Omega([h_2,v_1],h_1)\big)
  -f_2h_2\big(\csum\Omega([v_2,h_1],h_2)\big),
  \\
  \widetilde{\tau}_{5} (\nabla^2 \PP F) &= (h_1(v_2f_1))\Omega(v_1,h_1) +
  (v_2f_1)h_1\Omega(v_1,h_1) + (h_1f_1)v_2\Omega(v_1,h_1)
  \\
  & + (h_1(v_2f_2)) \Omega(v_2,h_2) + (v_2f_2)h_1\Omega(v_2,h_2) +
  f_1h_1\big(\csum\Omega([v_2,v_1],h_1)\big)
  \\
  & + (h_1f_1)\Omega(v_2,h_2)+ f_2[h_1,v_2]\Omega(v_2,h_2)
  -f_2v_2\big(\csum\Omega([v_2,h_1],h_2)\big),
  \\
  \widetilde{\tau}_{6} (\nabla^2 \PP F) &=(v_1(h_2f_1))\Omega(v_1,h_1) +
  (h_2f_1)v_1\Omega(v_1,h_1) + (v_1f_1)h_2\Omega(v_1,h_1)
  \\
  & + (v_1(h_2f_2))\Omega(v_2,h_2) + (h_2f_2)v_1\Omega(v_2,h_2) +
  f_1v_1\big(\csum\Omega([h_2,v_1],h_1)\big)
  \\
  & + (v_1f_2)h_2\Omega(v_2,h_2) +
  f_2[v_1,h_2]\Omega(v_2,h_2)-f_2h_2\big(\csum\Omega([v_2,v_1],h_2)\big).
\end{alignat*}
Then, using Remark \ref{lemma:PP_Omega}, the above expressions can be written
as
\begin{equation}
  \label{eq:comp_PP}
  \begin{aligned}
    \widetilde{\tau}_{i} (\nabla \PP F)=\eta_1^i \Omega(v_1, h_1) +\eta_2^i
    \Omega(v_2, h_2) & & i&=1,2,
    \\
    \widetilde{\tau}_{j} (\nabla^2 \PP F)=\eta_1^j \Omega(v_1, h_1) +\eta_2^j
    \Omega(v_2, h_2) & & j&=3,4,5,6
  \end{aligned}
\end{equation}
where $\eta_i^k$ are functions on $\TM$.  If we consider the matrix
\begin{displaymath}
  \Theta = \left(
    \begin{matrix}
      \eta_1 & \eta_1^1 & \dots & \eta_1^6
      \\
      \eta_2 & \eta_2^1 & \dots & \eta_2^6
  \end{matrix}
  \right),
\end{displaymath}
we can have the following
\begin{proposition}
  \label{prop:PP_int}
  The operator $\PP$ is formally integrable if and only if
  $\mathrm{rank}\, \Theta=1$.
\end{proposition}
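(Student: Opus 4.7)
The plan is to relate formal integrability of $\PP$ to an algebraic condition on the $2\times 7$ matrix $\Theta$ by reducing every compatibility condition of the enlarged system to a linear equation in the pair $(a_{11},a_{22})$, where $a_{ii}:=\Omega(v_i,h_i)$. By \eqref{eq:comp_PP}, the obstruction maps $\widetilde{\tau}_1,\widetilde{\tau}_2,\widetilde{\tau}_3,\ldots,\widetilde{\tau}_6$ produce exactly the six linear equations $\eta_1^i a_{11}+\eta_2^i a_{22}=0$ whose coefficient pairs form the last six columns of $\Theta$, while $P_\Psi F=0$ itself provides the first column via $\eta_1 a_{11}+\eta_2 a_{22}=0$. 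Hence the obstructions captured by $\widetilde{\tau}$ and $\widetilde{\tau}^1$ amount precisely to the seven linear equations in $(a_{11},a_{22})$ encoded by $\Theta$, and Remark \ref{lemma:PP_Omega} guarantees that this linear form is preserved at every higher order, since all iterated Lie derivatives of $\Omega$ along the adapted basis \eqref{eq:basis_1} can be expressed algebraically in $a_{11}$ and $a_{22}$ alone.

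For sufficiency, when $\rank\Theta=1$ every column of $\Theta$ is proportional to $(\eta_1,\eta_2)^{\top}$, so each new obstruction is a $C^\infty(\TM)$-multiple of $P_\Psi F$ and therefore vanishes on any formal solution of $\PP$. Combining this with Lemma \ref{t:2ac}, together with a routine dimension count (analogous to the one performed in Section \ref{sec:3_1}) showing that the maps $\widetilde{\tau}$ and $\widetilde{\tau}^1$ of \eqref{egzakt22} indeed saturate the cokernels of $\sigma_3(\PP)$ and $\sigma_4(\PP)$, one obtains that $\overline{\pi}_k$ is surjective at every order. The Cartan--K\"ahler/Goldschmidt theorem (Theorem \ref{Cartan_Kahler}) then delivers the formal integrability of $\PP$.

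For necessity, suppose that $\PP$ is formally integrable but $\rank\Theta=2$ at some point $x\in\TM$. Then for some index $i$ the pair $(\eta_1^i,\eta_2^i)$ is linearly independent of $(\eta_1,\eta_2)$, so the corresponding compatibility equation is not a consequence of $P_\Psi F=0$ and obstructs the lifting of some formal solution, contradicting integrability. The main obstacle in making this argument fully rigorous is the uniform treatment of arbitrarily high order: one must ensure that the finitely many obstruction maps collected in $\Theta$ truly encode every possible source of compatibility condition for $\PP$. This is precisely what Remark \ref{lemma:PP_Omega} together with the preceding completeness corollary provides --- any additional compatibility condition is forced to be a $C^\infty(\TM)$-linear relation in $(a_{11},a_{22})$, and therefore either duplicates a column already present in $\Theta$ (and adds nothing) or forces $a_{11}=a_{22}=0$, which is exactly the non-integrable scenario $\rank\Theta=2$.
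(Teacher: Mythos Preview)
Your overall strategy --- reducing every compatibility condition to a linear equation in $(a_{11},a_{22})$ and then reading off the rank of $\Theta$ --- matches the paper's, and your treatment of the necessity direction is fine. The gap is in the sufficiency direction, specifically in how you control the compatibility conditions at orders $\geq 5$.

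You invoke Lemma \ref{t:2ac} to conclude that $\overline{\pi}_k$ is surjective at every order. But Lemma \ref{t:2ac} computes the Spencer cohomology groups $H^{m,2}$ of the operator $\mathcal{P}$, not of the enlarged operator $\PP=(\mathcal{P},P_\Psi)$. Since $g_m(\PP)\subsetneq g_m(\mathcal{P})$ (the additional symbol equation $f_1 A(\cdot,v_1,v_1)+f_2 A(\cdot,v_2,v_2)=0$ genuinely cuts the kernel down), the Spencer complex of $\PP$ is different, and the vanishing of $H^{m,2}(\mathcal{P})$ says nothing a priori about $H^{m,2}(\PP)$. The completeness argument from Remark \ref{lemma:PP_Omega} tells you only that \emph{if} a new compatibility condition appears it will again be linear in $(a_{11},a_{22})$; it does not tell you that its coefficient pair must be proportional to a column already in $\Theta$. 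So ``$\rank\Theta=1$'' does not, by your argument alone, rule out a genuinely new obstruction at order $5$ or higher.

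The paper closes this gap differently: rather than tracking the cohomology of $\PP$ order by order, it shows that the adapted basis \eqref{eq:basis_1} is quasi-regular for the symbol of the \emph{first prolongation} of $\PP$, so Cartan's test is satisfied and the prolonged symbol is involutive. Once involutivity holds, the Cartan--K\"ahler theorem (Theorem \ref{Cartan_Kahler}) guarantees that the compatibility conditions captured by $\widetilde{\tau}$ and $\widetilde{\tau}^1$ --- i.e.\ those encoded in $\Theta$ --- are the only ones; no separate higher-order argument is needed. To repair your proof along your own lines you would have to either establish 2-acyclicity for $\PP$ itself (not for $\mathcal{P}$) or carry out the quasi-regular basis check for $\sigma_3(\PP)$ as the paper does.
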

\begin{proof}
  Applying the method used in the previous chapter we get that a 2\ts{nd}
  order solution $F$ can be lifted into a third order solution iff
  $\tau_4(\nabla \PP F)=0$ and a 3\ts{rd} order solution can be lifted into a
  third order solution iff $\tau^1_4(\nabla^2 \PP F)=0$.  The compatibility
  conditions expressed in terms of $\Omega$ can be written as a system of
  (homogeneous) linear algebraic system equating the right hand side of
  \eqref{eq:comp_PP} to zero. These equations are identically satisfied if and
  only if they are multiple of equation \eqref{eq:reduced_comp}, that is the
  $\mathrm{rank}\, \Theta=1$. Moreover, it is easy to show that
  \eqref{eq:basis_1} is a quasi-regular basis for the first prolongation of
  the symbol of $\PP$, that is the Cartan's test is satisfied.  From the
  Cartan-K{\"a}hler theorem (Theorem \ref{Cartan_Kahler}) we get that $\PP$ is
  formally integrable.
\end{proof}
Based on Remark \ref{rem:eta_i} and Proposition \ref{prop:PP_int} we have
the following
\begin{theorem}
  Let $S$ be a non-isotropic analytic spray on a 3-dimensional analytic
  manifold with distinct Jacobi eigenvalues. If
  $\Phi' \not \in Span \{J, \Phi\}$ or the compatibility condition
  \eqref{third_order_cond} is reducible but not identically zero, then the
  spray is locally projective metrizable if and only if $\eta_1\cdot \eta_2<0$
  and $\mathrm{rank}\, \Theta=1$.
\end{theorem}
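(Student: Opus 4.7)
The plan is to prove the two implications separately, combining \textbf{Proposition \ref{prop:PP_int}} (formal integrability of $\PP$ equivalent to $\mathrm{rank}\,\Theta=1$), the positive quasi-definiteness required of any Finsler solution, and the Cartan-K\"ahler/Goldschmidt theorem in the analytic setting. Under the hypothesis ``$\Phi'\not\in\mathrm{Span}\{J,\Phi\}$ or \eqref{third_order_cond} is reducible but not identically zero'', we are precisely in the situation in which the extra compatibility condition is of second order, so the augmented operator $\PP=(\P,P_\Psi)$ introduced in \eqref{eq:PP} is the object to analyze and $\Theta$ is nondegenerate in the sense that its first column $(\eta_1,\eta_2)$ is nonzero.

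\emph{Necessity.} Suppose $S$ is locally projective metrizable, so there is a positive quasi-definite Finsler function $F$ solving $\PP$. In the adapted basis \eqref{eq:basis_1}, \textbf{Remark \ref{rem:Omega_adapted_P_3}} restricts the only possibly nonzero components of $\Omega=dd_JF$ to $a_{11}=\Omega(v_1,h_1)$ and $a_{22}=\Omega(v_2,h_2)$, and positive quasi-definiteness forces $a_{11}>0$ and $a_{22}>0$. The nontrivial relation \eqref{eq:reduced_comp}, $\eta_1 a_{11}+\eta_2 a_{22}=0$, must therefore be satisfied by a strictly positive pair; by \textbf{Remark \ref{rem:eta_i}} neither $\eta_i$ can vanish, and the only way two positives can balance is $\eta_1\eta_2<0$. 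Furthermore, by the identities \eqref{eq:comp_PP}, each of the additional obstruction maps $\widetilde{\tau}_1,\dots,\widetilde{\tau}_6$ yields a linear relation $\eta_1^k a_{11}+\eta_2^k a_{22}=0$ that also holds along $F$. Thus the vector $(a_{11},a_{22})\neq(0,0)$ lies in the kernel of every row-pair of $\Theta$; hence $\mathrm{rank}\,\Theta\leq 1$, and since $(\eta_1,\eta_2)\neq 0$ we get $\mathrm{rank}\,\Theta=1$.

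\emph{Sufficiency.} Assume $\eta_1\eta_2<0$ and $\mathrm{rank}\,\Theta=1$. By \textbf{Proposition \ref{prop:PP_int}}, the operator $\PP$ is formally integrable, and by \textbf{Theorem \ref{Cartan_Kahler}} in the analytic case every formal solution extends to a local analytic solution. At a chosen point $x\in\TM$, the admissible initial jet data for the unknowns $a_{11},a_{22}$ is constrained only by the single linear relation $\eta_1a_{11}+\eta_2a_{22}=0$; the condition $\eta_1\eta_2<0$ guarantees that this half-line meets the open cone $\{a_{11}>0,\ a_{22}>0\}$. Choose such initial data, complete it to compatible higher-order jet data, and solve to obtain an analytic $F$ whose Hessian is positive quasi-definite at $x$. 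Since positive quasi-definiteness is an open condition, it persists in a neighborhood of $x$, so $F$ is genuinely a Finsler function there. As $F$ solves the extended Rapcs\'ak system, its canonical spray is projectively equivalent to $S$, yielding local projective metrizability.

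The main obstacle in this plan is the \emph{completeness} issue lurking behind \textbf{Proposition \ref{prop:PP_int}}: one must know that the finite list of conditions encoded in $\Theta$ is the \emph{entire} obstruction to formal integrability of $\PP$, not just the first two orders. This is precisely the content of \textbf{Remark \ref{lemma:PP_Omega}} together with the corollary immediately following it: because any Lie derivative of $\Omega$ can be expressed algebraically in $a_{11},a_{22}$, every higher-order compatibility condition reduces to a linear homogeneous relation between these two components and is therefore already captured by $\Theta$. Once this is granted, the rest of the argument is the standard analytic Cartan-K\"ahler passage from formal to genuine local solutions, plus the elementary sign check $\eta_1\eta_2<0$ needed to keep the Finsler positivity intact.
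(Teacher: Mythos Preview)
Your proof is correct and follows essentially the same approach as the paper: invoke Proposition~\ref{prop:PP_int} for formal integrability when $\mathrm{rank}\,\Theta=1$, use the sign condition $\eta_1\eta_2<0$ to pick positive initial data $a_{11},a_{22}>0$, and pass to a local analytic solution via Cartan--K\"ahler. In fact you are more thorough than the paper's own proof, which treats only the sufficiency direction explicitly and leaves necessity implicit in Remark~\ref{rem:eta_i} and the discussion around \eqref{eq:comp_PP}; your necessity argument (that a genuine positive quasi-definite solution forces $(a_{11},a_{22})$ into the common kernel of all columns of $\Theta$, hence $\mathrm{rank}\,\Theta\leq 1$) is the right way to fill that gap. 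One minor remark: your final paragraph locates the ``completeness'' of the obstruction list in Remark~\ref{lemma:PP_Omega} and its corollary, whereas the paper's Proposition~\ref{prop:PP_int} actually secures this via the Cartan test (quasi-regularity of the basis for the first prolongation of $\sigma(\PP)$); both justifications are valid, but they are not the same mechanism.
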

\begin{proof}
  Under the hypothesis of the theorem, the operator $\PP$ corresponding to the
  projective metrizability is integrable. Considering at any point $x\in \TM$
  a second order solution $j_{2,x}$ having $a_{11}> 0$ and $a_{22}> 0$ one can
  extend it into an analytic positive quasi-definite solution. Therefore, the
  spray is locally projective metrizable.
\end{proof}

\subsection*{Example} 

Let us consider a spray locally described by the system of second order
differential equation
\begin{equation}
  \label{eq:example}
  \ddot{x}_1=f^1\Big(x_1,x_3,\frac{\dot{x}_1}{\dot{x}_3}\Big)\!\cdot \!
  {\dot{x}_3}^{2}, \qquad
  \ddot{x}_2=f^2\Big(x_2,x_3,\frac{\dot{x}_2}{\dot{x}_3}\Big) \!\cdot
  \!{\dot{x}_3}^{2}, \qquad 
  \ddot{x}_3=f^3(x_3) \!\cdot \! \dot{x}_3^{2},
\end{equation}
where $f^1$, $f^2$ and $f^3$ are analytic functions. The non-zero connection
coefficients
\begin{math}
  \Gamma^i_j=-\frac{1}{2}\frac{\partial f^i}{\partial y^j}
\end{math}
are 
\begin{alignat*}{3}
  \Gamma^{1}_{1}&=-\frac{1}{2} \partial_3 f^1 \! \cdot \! y_3, \qquad &
  \Gamma^{2}_{2}&=-\frac{1}{2}\partial_3f^2 \!\cdot \! y_3, \qquad &
  \\
  \Gamma^{1}_{3}&=\hphantom{-}\frac{1}{2}\partial_3f^1 \!\cdot \! y_1-f^1
  \!\cdot \!  y_3, \qquad &
  \Gamma^{2}_{3}&=\hphantom{-}\frac{1}{2}\partial_3f^2 \!\cdot \!  y_2 - f^2
  \!\cdot \!  y_3, \qquad & \Gamma^{3}_{3}&=-f^3 \!\cdot \! y_3.
\end{alignat*}
Then components of the Jacobi endomorphism can be computed by the formula
\begin{math}
  \Phi^i_j=-\frac{\partial f^i}{\partial x^j} - \Gamma^i_l\Gamma^l_j -
  S(\Gamma^i_j). 
\end{math}
The matrix
\begin{math}
  (\Phi^i_j)=\left(
    \begin{smallmatrix}
      \lambda_1 & 0 & \star \\
      0 & \lambda_2  & \star \\
      0 & 0 & 0
    \end{smallmatrix}
  \right)
\end{math}
is upper triangular where the eigenvalues are
\begin{alignat*}{1}
  \lambda_i = \ & \frac{1}{2} \!\left( \partial_{13} f^i \!-\!f^3 \partial_{33} f^i
  \right)y_1y_3
  + \frac{1}{2}\left(\!\partial_{23} f^i \!+\!f^i \partial_{33}f^i
    \!+\!f^3\partial_{3}f^i \! -\! 2\partial_1 f^i \! -\!
    \frac{1}{2}(\partial_3 f^i)^2 \right)y_3^2, 
\end{alignat*}
for $i=1,2$ and $\lambda_3\!=\!0$. The semibasic eigenvectors are
\begin{alignat*}{2}	
  h_1&=\frac{\partial}{\partial x_1}+\frac{1}{2}y_3\partial_3f^1
  \frac{\partial}{\partial y_1}, \qquad
  & v_1&=\frac{\partial}{\partial y_1},
  \\
  h_2&=\frac{\partial}{\partial x_2}+\frac{1}{2}y_3\partial_3f^2
  \frac{\partial}{\partial y_2}, \qquad
  & v_2&=\frac{\partial}{\partial y_2},
  \\
  h_3&= \sum_{i=1}^3 y_i\frac{\partial}{\partial
    x_i}+\sum_{i=1}^3y_3^2 f^i \frac{\partial}{\partial y_i}, \qquad 
  & v_3&=\sum_{i=1}^3y^i\frac{\partial}{\partial y_i},
\end{alignat*}
where $v_3\!=\!C$ and $h_3\!=\!S$ is the spray corresponding to
\eqref{eq:example}.

When $\lambda_1,\lambda_2$ are zero or $\lambda_1,\lambda_2$ are nonzero with
$\lambda_1=\lambda_2$, then the spray is flat or isotropic, therefore it is
projective metrizable (see \cite[Corollary 4.7]{MiMu_2017}).  When
$\lambda_1,\lambda_2$ are nonzero with $\lambda_1\neq \lambda_2$, then the
spray $S$ is not isotropic.  Computing the semi basic dynamical covariant
derivative of $\Phi$ we get
\begin{math}
  \Phi'(h_i)=v[h_i,S]=\mu_i v_i,
\end{math}
($i=1,2,3$) where
\begin{alignat*}{1}
  \mu_i= \frac{1}{4} \left(y^3y^3 \big((\partial_3f^i)^2 +4\partial_1f^i
    -2f^i\partial_{33}f^i -2f^3\partial_3f^i\big) +2y^iy^3f^3\partial_{33}f^i
  \right), \qquad i =1,2,
\end{alignat*}
and $\mu_3=0$. Therefore $\Phi'= A \Phi + B J$ with
$A\!=\!(\mu_1\!-\!\mu_2)/(\lambda_1\!-\!\lambda_2)$ and
$B\!=\!(\lambda_1\mu_2\!-\!\lambda_2\mu_1)/(\lambda_1\!-\!\lambda_2)$, that is
$\Phi'\in Span\{\Phi, J\}$.  Moreover, one has 
\begin{displaymath}
  [v_1,h_2]=0, \quad [v_2,h_1]=0, \quad [h_1,h_2]=0, \quad [v_1,h_1] 
  \in \mathrm{Span}(v_1), \quad [v_2,h_2]  \in \mathrm{Span}(v_2), 
\end{displaymath}
therefore the functions $\kappa^i_{ij}$, $\theta^i_{ij}$, $\beta_{ij}^i$,
$\gamma_{ij}^i$ in \eqref{coeff_2_1}, \eqref{coeff_2_2} and
\eqref{eq:3rd_order_coeff} vanish. Hence we have \eqref{eq:phi_loc} and from
Corollary \ref{adaptedbase_corollary} we can obtain that the compatibility
condition \eqref{third_order_cond} is satisfied.  Using Theorem
\ref{thm:form_int} we get that $\P$ defined in \eqref{eq:P} is formally
integrable and from Corollary \ref{cor:P_met} we obtain that
\eqref{eq:example} is locally projective metrizable.

\bigskip

\noindent
Authors’ addresses:

\bigskip

\noindent
Tam\'as Milkovszki,
\\
Institute of Mathematics, University of Debrecen,
\\
H-4032 Debrecen, Egyetem t\'er 1, Hungary
\\
\emph{E-mail address:} {\tt milkovszki@science.unideb.hu}

\medskip

\noindent
Zolt\'an Muzsnay,
\\
Institute of Mathematics, University of Debrecen,
\\
H-4032 Debrecen, Egyetem t\'er 1, Hungary
\\
\emph{E-mail address:} {\tt muzsnay@science.unideb.hu}
\\
\emph{URL}: {\tt math.unideb.hu/muzsnay-zoltan}

\end{document}